  \def\gn#1#2{{$\href{http://groupnames.org/\#?#1}{#2}$}}
\def\gn#1#2{$#2$}  
\tikzset{sgplattice/.style={inner sep=1pt,norm/.style={red!50!blue},char/.style={blue!50!black},
  lin/.style={black!50}},cnj/.style={black!50,yshift=-2.5pt,left=-1pt of #1,scale=0.5,fill=white}}
\newcommand{\mychi}{\raisebox{0pt}[1ex][1ex]{$\chi$}}
\newcommand{\SL}{\operatorname{SL}}
\newcommand{\TL}{\operatorname{TL}}
\newcommand{\IdGroup}{\operatorname{IdGroup}}
\newcommand{\id}{\operatorname{id}}
\newcommand{\Diff}{\operatorname{Diff}}
\newcommand{\SmallGroup}{\operatorname{SmallGroup}}
\newcommand{\Mod}[1]{\ (\mathrm{mod}\ #1)}
\newcommand{\Ker}{\operatorname{Ker}}
\newcommand\blfootnote[1]{%
  \begingroup
  \renewcommand\thefootnote{}\footnote{#1}%
  \addtocounter{footnote}{-1}%
  \endgroup
}
\newtheorem{theorem}{Theorem}[section]
\newtheorem{lemma}[theorem]{Lemma}
\newtheorem{proposition}[theorem]{Proposition}
\newtheorem*{remark*}{Remark}
\newtheorem*{theorem*}{Theorem}
\newtheorem*{conjecture*}{Conjecture}
\newtheorem*{question*}{Question}
\numberwithin{table}{section}
\journal{Journal Name}
\def\ps@pprintTitle{%
 \let\@oddhead\@empty
 \let\@evenhead\@empty
 \def\@oddfoot{}%
 \let\@evenfoot\@oddfoot}
\begin{document}

\author{Piotr Mizerka}

\nolinenumbers
\setcounter{section}{-1}
\begin{abstract}
There are four groups $G$ fitting into a short exact sequence
$
1\rightarrow\SL(2,5)\rightarrow G\rightarrow C_2\rightarrow 1,
$
where $\SL(2,5)$ is the special linear group of $(2\times 2)$-matrices with entries in the field of five elements. Except for the direct product of $\SL(2,5)$ and $C_2$, there are two other semidirect products of these two groups and just one non-semidirect product $\SL(2,5).C_2$, considered in this paper. It is known that each finite nonsolvable group can act on spheres with arbitrary positive number of fixed points. Clearly, $\SL(2,5).C_2$ is a nonsolvable group. Moreover, it turns out that $\SL(2,5).C_2$ possesses a free representation and as such, can potentially act pseudofreely with nonempty fixed point set on manifolds of arbitrarily large dimension. We prove that $\SL(2,5).C_2$ cannot act effectively with odd number of fixed points on low-dimensional spheres. In the special case of effective one fixed point actions, we are able to exclude a wider class of spheres. Moreover, we prove that specific pseudofree one fixed point actions of $\SL(2,5).C_2$ on spheres do not exist.\blfootnote{$2020$ Mathematics Subject Classification. Primary $57S25$; Secondary $55M35$.}
\end{abstract}

\begin{frontmatter}
\title{\textbf{\centerline{Exclusions of smooth actions on spheres}\newline
of the non-split extension of $C_2$ by $\SL(2,5)$}}
\end{frontmatter}
\section{Introduction}\label{section:intro}
A smooth action of finite group $G$ on a smooth manifold $M$ is a homomorphism $\varphi:G\rightarrow \Diff(M)$, where $\Diff(M)$ denotes the group of diffeomorphisms of $M$. All groups appearing in this paper are assumed to be finite and actions to be smooth. This article concerns exclusions of actions on spheres with exactly one fixed point or with odd number of fixed points. We refer to such actions as \emph{one fixed point actions} or \emph{odd fixed point actions} respectively.

In \cite[p. 55]{Montgomery1946} Montgomery and Samelson wrote the following: \emph{"If a compact Lie group $G$ acts smoothly on the $n$-sphere $S^n$ in such
a way as to have one stationary point, it is likely that there must be a second
stationary point. This appears difficult to prove."}. It is already known which groups admit one fixed point actions on spheres. It turns out that such groups are precisely the groups admitting fixed point free actions on disks. The characterization of groups admitting fixed point free actions on disks was provided by Oliver \cite{Oliver1975} in $1975$. Such groups $G$ are characterized in the following way: $G$ does not contain subgroups $P,H\leq G$ fitting into a sequence $P\trianglelefteq H\trianglelefteq G$ such that $P$ and $G/H$ are groups of prime power order and the quotient $H/P$ is cyclic. We refer to such groups $G$ as \emph{Oliver groups}. The first examples of groups admitting one fixed point $G$-actions go back to Stein \cite{Stein1977} for $G=\SL(2,5)$ and Petrie \cite{Petrie1982} for abelian groups with three non-cyclic Sylow subgroups. In $1995$, Laitinen, Morimoto and Pawałowski \cite{Laitinen1995} proved that each nonsolvable group admits a one fixed point action on a sphere. Later, in $1998$, by modifying the techniques descibed in the paper concerning nonsolvable groups, Laitinen and Morimoto \cite{Morimoto1998} generalized this result and showed that Oliver groups coincide with the groups admitting one fixed point actions on spheres.

Further research has shown that the minimal dimension of a sphere admitting one fixed point actions is $6$. First, the combined work of De Michelis \cite{Demichelis1989}, Buchdahl et. al. \cite{Buchdahl1990} and Furuta \cite{Furuta1989} has shown that $S^n$ does not admit one fixed point actions for $n\leq 5$. Later, on the other hand, Bak and Morimoto \cite{Bak1992,Bak2005,Morimoto1987,Morimoto1989,Morimoto1991} proved that $A_5$, the alternating group on five letters, admits one fixed point actions on $S^n$ for every $n\geq 6$.

This leads us to the following question: given an Oliver group $G$ and an integer $n\geq 6$, is it true that there exists a one fixed point action of $G$ on $S^n$? Apart from the results described in the last paragraph concerning this question, other have been obtained as well. Let us discuss them. In $1977$, Stein \cite{Stein1977} proved that $\SL(2,5)$ can act on $S^7$ with exactly on fixed point. On the other hand, in $2016$, Borowiecka \cite{Borowiecka2016} excluded effective one fixed point actions of $\SL(2,5)$ on $S^8$. Two years later, in the joint work of Borowiecka and the author \cite{Borowiecka2018}, new exclusions of effective one fixed point actions have been obtained for Oliver groups of order up to $216$ and spheres of dimensions varying from $6$ to $10$. In $2020$, Morimoto and Tamura \cite{Morimoto2020} excluded odd fixed point actions of $S_5$, the symmetric group on five letters, and $\SL(2,5)$ on spheres of dimensions varying from $0$ to $13$. Further exclusion results concerning odd fixed point actions have been recently obtained by Tamura \cite{Tamura2020}. These results concern six Oliver groups (including symmetric groups, special linear and projective special linear groups, automorphism groups and Mathieu groups) and cover many of the dimensions from the set $\{0,...,50\}$. The methods used in the aticles of Borowiecka, Morimoto, Tamura and the author have been used to extend exclusion results in the PhD thesis of the author \cite[Theorem 1.4, Theorem 5.29]{Mizerka2020} by the application of GAP \cite{GAP4} software.

A general scheme in the study of transformation groups is to consider prescibed isotropy subgroups. The simpliest case constitute semifree actions (i. e. actions with trivial isotropy subgroups outside of the fixed point set). One can relax this criterion however and allow a weaker version of semifree actions. Such actions have restrictive dimensions of fixed point sets. After Illman \cite{Illman1982}, we call an action of a group $G$ on a smooth manifold $M$ a $k$-\emph{pseudofree} action if $\dim M^H\leq k$ for any nontrivial subgroup $H\leq G$. Lately, Morimoto announced a result that $3$-pseudofree one fixed point actions of $G$ on even-dimensional homotopy spheres $\Sigma^n$ are possible only for $G=A_5, A_5\times C_2$ or $G=S_5$ and $n=6$. On the other hand, if a $k$-pseudofree action is required to exist for arbitrarily large spheres, the group in question must possess a free representation. The classification of such groups was provided in \cite[6.3.1 Theorem]{Wolf1967}. Following this classification, there exists a group of order $240$ possessing a free representation which contains $\SL(2,5)$ as a subgroup of index $2$ (this group is denoted by $\TL_2(5)$ in \cite{Laitinen1986}). In the GAP \cite{GAP4} SmallGroup library, the group $\SL(2,5).C_2$ we consider appears with $\IdGroup=[240,89]$. It turns out that $\SL(2,5).C_2$ possesses a free representation (see Proposition \ref{proposition:primeCyclic}) and therefore $\SL(2,5).C_2\cong \TL_2(5)$. 

In a private correspondence with the author, Morimoto suggested that there may exist $6$-pseudofree effective one fixed point actions of $\SL(2,5).C_2$ on $S^n$ whenever $n=6+8k$ for some $k\geq 1$. Our main results concern the converse statement. Before, we formulate these results, let us recall a notion of a homology sphere. Given a commutative ring $R$ with unity, we call a closed smooth manifold $\Sigma$ of dimension $n$ an $R$-\emph{homology sphere} if the homology groups of $\Sigma$ with coefficients in $R$ coincide with the groups $H_k(S^n;R)$ for $k=0,\ldots,n$. The two main theorems of the paper are the following.
 \begin{theorem}\emph{[cf. Theorem \ref{theorem:main12}]}\label{theorem:main1}
 $\SL(2,5).C_2$ cannot act effectively with odd number of fixed points on $n$-dimensional $\mathbb{Z}$-homology sphere provided $n\in\{0,1,\ldots,13\}$. Moreover, $\SL(2,5).C_2$ cannot act effectively with exactly one fixed point on an $S^n$ provided $n\in\{0,1,\ldots,13\}\cup\{15,16,17\}$.
 \end{theorem}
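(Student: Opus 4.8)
\emph{Proof strategy.} Suppose, towards a contradiction, that $G:=\SL(2,5).C_2$ acts effectively and smoothly on a $\mathbb{Z}$-homology $n$-sphere $\Sigma$ with $|\Sigma^G|$ odd, so $\Sigma^G\neq\emptyset$. The plan is to turn the fixed-point data into numerical obstructions and show that they cannot all be met. First I would clear away the smallest $n$: a group of order $240$ cannot act effectively on $S^0$, and for $n\ge1$ the space $\Sigma$ is connected, so by Bochner's linearization theorem the $G$-action on $T_x\Sigma$ at any $x\in\Sigma^G$ is faithful (its kernel would fix a neighborhood of $x$, hence all of $\Sigma$). Thus each $x\in\Sigma^G$ carries a faithful real $G$-module $V_x:=T_x\Sigma$ with $\dim V_x=n$ and $V_x^G=0$. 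In any faithful real $G$-module the central involution $-I\in\SL(2,5)$ must act nontrivially, and the smallest real $G$-module on which it does has dimension $8$ (it is a free $G$-module, cf.\ Proposition \ref{proposition:primeCyclic}); hence $n\ge8$. In the odd-fixed-point case only $n\in\{8,\dots,13\}$ survive, and for each such $n$ the module $V_x$ ranges over an explicit finite list read off the real character table of $G\cong\TL_2(5)$.

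The next step is to extract Smith-theoretic constraints. For every prime $p\mid|G|=2^{4}\cdot3\cdot5$ and every $p$-subgroup $P\le G$, the fixed set $\Sigma^{P}$ is a $\mathbb{Z}_p$-homology sphere of some dimension $\delta(P)\in\{-1,0,\dots,n\}$ (with $\delta(P)=-1$ for $\Sigma^{P}=\emptyset$); the function $\delta$ is non-increasing along chains of $p$-subgroups, it satisfies $\delta(P)=\dim V_x^{P}$ for each $x\in\Sigma^{P}$, and for every elementary abelian $E\cong(\mathbb{Z}_p)^2\le G$ the Borel formula $n-\delta(E)=\sum_{E'}\bigl(\delta(E')-\delta(E)\bigr)$ holds, the sum running over the $p+1$ index-$p$ subgroups $E'$ of $E$. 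In tandem I would use Euler-characteristic congruences: since $\SL(2,5)\trianglelefteq G$ with $2$-group quotient $G/\SL(2,5)\cong C_2$, we get $\chi(\Sigma^{\SL(2,5)})\equiv\chi\bigl((\Sigma^{\SL(2,5)})^{C_2}\bigr)=\chi(\Sigma^G)=|\Sigma^G|\equiv1\Mod{2}$, so $\Sigma^{\SL(2,5)}\neq\emptyset$; iterating such congruences along composition series of the solvable subgroups of $G$ pins the parities of the $|\Sigma^{L}|$ and further restricts $\delta$.

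Now I would branch on $\dim\Sigma^{\SL(2,5)}$. If it equals $0$, then $\Sigma^{\SL(2,5)}$ is a finite set of odd cardinality, i.e.\ $\SL(2,5)$ acts effectively on $\Sigma$ with an odd number of fixed points, which for $n\le13$ is excluded by Morimoto and Tamura \cite{Morimoto2020}. If $\dim\Sigma^{\SL(2,5)}>0$, then at a $G$-fixed point lying in a positive-dimensional component the module $V_x$ has $V_x^{\SL(2,5)}\neq0$, forcing a nonzero multiple of the sign character of $G/\SL(2,5)$ inside $V_x$, and the outer order-$4$ element of $G$ acts on that positive-dimensional fixed manifold and its normal data in a very restricted way, to be analyzed via the Smith theory of the Sylow subgroups $Q_8,C_3,C_5$ of $\SL(2,5)$ together with the $G$-action on $H_{\ast}(\Sigma;\mathbb{Q})$. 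To close the argument I would invoke equivariant index theory --- the Atiyah--Singer $G$-signature theorem and the Atiyah--Bott Lefschetz fixed point formula applied to elements $g\in G$ of prime and prime-power order --- equating the (vanishing) global invariants of the homology sphere $\Sigma$ with sums of local contributions built from the rotation numbers of $g$ on $\bigoplus_{x\in\Sigma^g}V_x$ and on the normal bundles of the components of $\Sigma^g$, which yields congruences that must hold simultaneously. Enumerating in GAP \cite{GAP4} the conjugacy classes of prime-power and solvable subgroups of $G$, the admissible dimension functions $\delta$, and the candidate families $\{V_x\}$, one checks that for every $n\in\{8,\dots,13\}$ at least one of the Borel, Smith or $G$-signature constraints fails.

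For the one-fixed-point statement the values $n\le13$ are already handled, so only $n\in\{15,16,17\}$ remain. Here $\Sigma^G=\{x\}$, so there is a single tangential module $V=T_xS^n$, and excising an invariant disk about $x$ exhibits a fixed-point-free $G$-action on a $\mathbb{Z}$-homology $n$-disk whose boundary is the linear sphere $S(V)$; this is exactly Oliver's setting \cite{Oliver1975,Morimoto1998} with prescribed boundary representation. All of the above constraints now bear on the single module $V$ (faithful, $V^G=0$, each $S(V)^{P}$ a $\mathbb{Z}_p$-homology sphere of Smith- and Borel-prescribed dimension), and in addition the equivariant surgery (bordism) obstruction to building such a fixed-point-free action must vanish; a GAP-assisted search through the $15$-, $16$- and $17$-dimensional faithful $G$-modules with trivial fixed subspace should show that none satisfies all the requirements. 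I expect the main obstacle to be precisely this interlocking bookkeeping rather than any single ingredient: effectiveness forces every tangential module to be faithful, the real representation theory of $\TL_2(5)$ and of its many small subgroups must be controlled with care, and the $\mathbb{Z}_p$-homology-sphere structure of every $\Sigma^{P}$ couples the dimension data so tightly that ruling out all simultaneous solutions is delicate. It is consistent with this that $n=14$ --- the first of the dimensions $6+8k$, $k\ge1$, that Morimoto suggests should admit such an action --- is absent from the list.
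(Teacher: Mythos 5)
There is a genuine gap: the decisive exclusions are never actually performed. Everything up to the reduction ($n\ge 8$ via faithfulness of the tangential modules, Smith-theoretic dimension functions, the parity of $\chi(\Sigma^{\SL(2,5)})$) is sound but only narrows the problem to a finite list of candidate tangential modules; the claim that ``for every $n\in\{8,\dots,13\}$ at least one of the Borel, Smith or $G$-signature constraints fails'' and that a GAP search ``should show'' the $15$-, $16$-, $17$-dimensional cases die is precisely the content of the theorem, not a proof of it. Worse, the constraints you list are very likely insufficient: for instance $V=U_{5,1}\oplus W_{8,i}$ in dimension $13$ is faithful, has $V^G=0$, and assigns to every $p$-subgroup a fixed-point dimension consistent with Smith theory and Borel's formula, so none of your stated numerical tests rules it out. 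What kills it in the paper is a different idea: the generating pair of non-Oliver $2$-subgroups $Q_{8,A},Q_{16}$ (preimages of a $C_2^2$ and a $D_8$ in $S_5$) whose intersection contains $C_{4,A}$ and for which $\dim\Sigma^{Q_{8,A}}+\dim\Sigma^{Q_{16}}=\dim\Sigma^{C_{4,A}}$; since the mod~$2$ intersection form on the $\mathbb{Z}_2$-homology sphere $\Sigma^{C_{4,A}}$ in complementary positive dimensions is trivial, the intersection number $|\Sigma^{Q_{8,A}}\cap\Sigma^{Q_{16}}|=|\Sigma^{G}|$ is forced to be even. This intersection-number mechanism, together with Oliver's obstruction $\chi(\Sigma^H)\neq 1$ for non-Oliver $H$ (Lemma~\ref{lemma:MorimotoTamura}) and its refinement Lemma~\ref{lemma:discreteStrategy2}, is the engine of the paper's proof and is absent from your outline; the $G$-signature and Lefschetz formulas you invoke instead are not shown to produce any contradiction, and the ``equivariant surgery obstruction must vanish'' remark points in the wrong direction for a non-existence proof.

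Two smaller points. First, the dichotomy on $\dim\Sigma^{\SL(2,5)}$ is not a clean case division: $\SL(2,5)$ is not a $p$-group, so $\Sigma^{\SL(2,5)}$ need not be equidimensional, and a positive-dimensional component need not contain a $G$-fixed point; also the Morimoto--Tamura exclusion you cite in the $0$-dimensional branch must be checked to apply to $\mathbb{Z}$-homology spheres, not just standard ones. Second, your assertion $\delta(P)=\dim V_x^{P}$ for all $x\in\Sigma^{P}$ is only justified for $p$-subgroups (where Smith theory gives connectedness in positive dimension); you use it implicitly for larger subgroups. Neither of these is fatal, but both would need repair even if the main computational gap were filled.
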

 \begin{theorem}\emph{[cf. Theorem \ref{theorem:main22}]}\label{theorem:main2}
 There are no $5$-pseudofree one fixed point actions of $\SL(2,5).C_2$ on $\mathbb{Z}$-homology spheres. In case $\SL(2,5).C_2$ acts $6$-pseudofreely with exactly one fixed point on an $n$-dimensional $\mathbb{Z}$-homology sphere $\Sigma$, then $n=6+8k$ or $n=18+8k$ for $k\geq 0$. Moreover, if the action in question is effective and $n=6+8k$, then $k\geq 1$.
 \end{theorem}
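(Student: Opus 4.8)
The plan is to study the real $G$-module $V=T_x\Sigma$ tangent to $\Sigma$ at the unique fixed point $x$, where $G=\SL(2,5).C_2=\TL_2(5)$; by hypothesis $V^G=0$ and $\dim_{\mathbb R}V=n$. For each prime $p\mid|G|$ and each nontrivial $p$-subgroup $P\le G$, Smith theory makes $\Sigma^P$ a $\mathbb Z_p$-homology sphere; it is nonempty (it contains $x$) and is a smooth submanifold through $x$ with tangent space $V^P$, so $\dim\Sigma^P=\dim_{\mathbb R}V^P$. Thus $k$-pseudofreeness ($k\in\{5,6\}$) amounts to the family of inequalities $\dim_{\mathbb R}V^P\le k$ over all nontrivial $p$-subgroups $P$. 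Since a Sylow $2$-subgroup of $G$ is the generalized quaternion group $Q_{16}$, $G$ has a unique involution $z$, which is central; put $Z=\langle z\rangle$. A short argument (using that $G$ is the non-split extension, so $Z(G)=Z$) identifies $G/Z\cong S_5$. Every even-order subgroup of $G$ contains $z$, so $\Sigma^H\subseteq\Sigma^Z$ for such $H$; in particular $\Sigma^Z$ is a $\mathbb Z_2$-homology sphere of dimension $d:=\dim_{\mathbb R}V^Z$ carrying a smooth $G/Z\cong S_5$-action with $x$ as its only fixed point.

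I would next settle the low dimensions and the value of $d$. Every real irreducible $G$-module of dimension at most $5$ is trivial on $z$, because the smallest real $G$-module on which $z$ acts nontrivially has dimension $8$ (the real form of $\Ind_{\SL(2,5)}^{G}$ of a $2$-dimensional spin representation of $\SL(2,5)$). Hence if $n\le5$ the $G$-action factors through $G/Z\cong S_5$ or through $G/\SL(2,5)\cong C_2$, and one fixed point actions of these groups on $\mathbb Z$-homology spheres of dimension $\le5$ are ruled out by the low-dimensional results of De~Michelis, Buchdahl--Kwasik--Schultz and Furuta and, for $S_5$, by Morimoto--Tamura and Tamura. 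The same reasoning applied to the $S_5$-action on $\Sigma^Z$ (with $V^Z$ playing the role of the tangent module, the low-dimensional exclusions being reproved for $\mathbb Z_2$-homology spheres by the methods of the cited papers, e.g.\ a Smith-tower argument inside a Sylow $2$-subgroup of $S_5$) yields $d\ge6$. Combined with $d\le k$ this proves the first assertion: a $5$-pseudofree one fixed point action is impossible, while in the $6$-pseudofree case $d=\dim_{\mathbb R}V^Z=6$, so $\Sigma^Z$ is a $6$-dimensional $\mathbb Z_2$-homology sphere.

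The congruence on $n$ then comes from representation theory. Write $V=V^Z\oplus V_-$ with $z$ acting as $-\id$ on $V_-$. The faithful irreducibles of $\SL(2,5)$ are exactly the odd symmetric powers $\operatorname{Sym}^1,\operatorname{Sym}^3,\operatorname{Sym}^5$ of the $2$-dimensional spin representation and the Galois conjugate of $\operatorname{Sym}^1$; each has even complex dimension and is of quaternionic type, so it carries no nonzero symmetric invariant bilinear form. A complex irreducible of $G$ nontrivial on $z$ restricts on $\SL(2,5)$ either to one of these or to the sum of $\operatorname{Sym}^1$ with its Galois conjugate, hence has even complex dimension and is not of real type; therefore each real irreducible constituent of $V_-$ has real dimension $2\cdot(\text{even})\equiv0\pmod4$, and none has real dimension $4$ (the $2$-dimensional spin representations of $\SL(2,5)$ do not extend to $G$). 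Thus $\dim_{\mathbb R}V_-\equiv0\pmod4$ with $\dim_{\mathbb R}V_-\ne4$, i.e.\ $\dim_{\mathbb R}V_-\in\{0\}\cup\{8,12,16,\dots\}$, and $n=d+\dim_{\mathbb R}V_-=6+\dim_{\mathbb R}V_-\in\{6+8k:k\ge0\}\cup\{18+8k:k\ge0\}$. Here I would also feed in the remaining Smith inequalities $\dim_{\mathbb R}V^{C_3}\le6$ and $\dim_{\mathbb R}V^{C_5}\le6$ together with the actions of $N_G(C_3),N_G(C_5)$ on $\Sigma^{C_3},\Sigma^{C_5}$ — which force the two faithful $2$-dimensional $C_5$-representations to occur in $V|_{C_5}$ with equal multiplicity and bound how many non-free spin constituents $V_-$ may contain — and, if the representation-theoretic step is felt to need reinforcement, an Atiyah--Singer $G$-index computation on $\Sigma$ to re-exclude the borderline value $n=10$.

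Finally, for the effectivity clause: if the action is effective then $V$ is faithful, so $V_-\ne0$ (otherwise $z$ would act trivially), whence $\dim_{\mathbb R}V_-\ge8$ and $n=6+\dim_{\mathbb R}V_-\ge14$; equivalently, if $n=6+8k$ then $k\ge1$. The genuine obstacle in this scheme is the step $d\ge6$: it requires pushing the classical low-dimensional exclusions of one fixed point actions from spheres to $\mathbb Z_2$-homology spheres for the quotient $S_5$. The rest is the finite but delicate interplay of Smith theory with the explicit representation theory of $\TL_2(5)$ — precisely the kind of bookkeeping handled by the GAP-assisted methods referred to in the introduction.
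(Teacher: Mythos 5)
Your reduction of the congruence $n\in\{6+8k\}\cup\{18+8k\}$ to the statement $\dim V^{Z}=6$ is sound: the faithful real irreducibles of $\SL(2,5).C_2$ indeed have dimensions $8,8,8,12,12$, every admissible sum $8a+12b$ is of the form $8k$ or $12+8k$, and effectiveness forces at least one faithful summand, giving $k\geq 1$ when $n=6+8k$. But the load-bearing step of your scheme --- $d=\dim V^{Z}\geq 6$, equivalently the exclusion of a non-faithful tangential part of dimension $\leq 5$ --- is exactly the part you do not prove, and the route you sketch for it does not go through. You propose to view $\Sigma^{Z}$ as a low-dimensional sphere-like space with a one-fixed-point $S_5$-action and invoke De Michelis, Buchdahl et al., Furuta, and Morimoto--Tamura. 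Those results are stated for genuine spheres or $\mathbb{Z}$-homology spheres; $\Sigma^{Z}$ is only a $\mathbb{Z}_2$-homology sphere (and $\Sigma$ itself is merely a $\mathbb{Z}$-homology sphere in the theorem), so none of the cited exclusions applies as stated. Extending, say, Furuta's $4$-dimensional result to $\mathbb{Z}_2$-homology spheres is not ``bookkeeping''; it is an open-ended claim, and you acknowledge it is the genuine obstacle. As written, the first assertion (no $5$-pseudofree actions) and the determination $d=6$ therefore rest on an unproven statement.

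The paper closes this gap by entirely different, elementary means. Its Theorem 2.10 (proved with a mod $2$ intersection-form argument inside $\Sigma^{Q_{8,A}}$, plus Lemma 2.7 applied to the generating non-Oliver pair $Q_{16}$, $[24,4]$) forces $T_x(\Sigma)$ to contain $U_{5,1}$ or $U_6$; combined with the exclusion of $U_1$ via Lemma 2.9 and the $6$-pseudofree bound at the central $C_2$, the non-faithful part of $V$ must be exactly $U_{5,1}$ or exactly $U_6$. The case $V\cong U_{5,1}\oplus W$ is then killed by a mod $2$ intersection-number count of $\Sigma^{Q_{8,A}}$ and $\Sigma^{Q_{16}}$ inside $\Sigma^{C_{4,A}}$, using $\dim V^{C_{4,A}}=2=1+1=\dim V^{Q_{8,A}}+\dim V^{Q_{16}}$ and the fact that these two subgroups generate the whole group, which forces an even number of fixed points. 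Only $U_6$ survives, giving $d=6$ and, since $\dim U_6^{C_2}=6>5$, the nonexistence of $5$-pseudofree actions. If you want to salvage your outline, you should replace the appeal to low-dimensional exclusions on $\Sigma^{Z}$ by an argument of this intersection-theoretic type (or prove the $\mathbb{Z}_2$-homology-sphere versions of the classical results you cite); without one of these, the proof is incomplete.
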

 The article is organized as follows. In the first section we provide the necessary algebraic data for $\SL(2,5).C_2$. We compute the fixed point dimensions for irreducible $\mathbb{R}\SL(2,5).C_2$-modules for specific subgroups of $\SL(2,5).C_2$ and prove useful facts about these subgroups by means of the subgroup lattice of $\SL(2,5).C_2$. The next section contains the main technical exclusion results which the proofs of theorems \ref{theorem:main1} and \ref{theorem:main2} rely on. In the last section we prove the two main theorems of this paper.
\section{The necessary algebraic data for $\SL(2,5).C_2$}\label{section:algData}
In this section, we show the existence of subgroups of $\SL(2,5).C_2$ with suitable fixed point dimensions and forming generating sets for $\SL(2,5).C_2$. For that purpose, we present the real irreducible characters of $\SL(2,5).C_2$ and the fixed point dimensions for the corresponding representations of the actions of subgroups of our interest. The poset of conjugacy classes of $\SL(2,5).C_2$ shall be important as it provides the necessary information concerning the subgroup lattice of $\SL(2,5).C_2$.
\subsection{Nontrivial real irreducible characters of $\SL(2,5).C_2$}
The centre, $Z$, of $\SL(2,5).C_2$ is isomorphic to $C_2$ and $\SL(2,5).C_2/Z\cong S_5$. Among nontrivial real irreducible characters of $\SL(2,5).C_2$, $6$ characters are not faithful and $5$ are faithful. Let $U_1$, $U_{4,1}$, $U_{4,2}$, $U_{5,2}$, $U_{5,1}$ and $U_6$ be nontrivial irreducible $\mathbb{R}\SL(2,5).C_2$-modules with the non-faithful characters (these modules are faithful as $\mathbb{R}(\SL(2,5).C_2/Z=S_5)$-modules), and $W_{8,1}$, $W_{8,2}$, $W_{8,3}$, $W_{12,1}$, and $W_{12,2}$ irreducible faithful $\mathbb{R}\SL(2,5).C_2$-modules. The characters are presented in the table below (they can be derived from complex irreducible characters of $\SL(2,5).C_2$ which can be found at \cite{GroupNames} for example). The notation of conjugacy classes in the table is as follows. The class is denoted by a number indicating the order of its representative. If there is more than one class whose representative has a given order, then subsequent capital letters are used to distinguish between such classes.
\begin{table}[H]
$$
\begin{array}{c|rrrrrrrrrrrr}
  \rm class&\rm1&\rm2&\rm(3)&\rm(4A)&\rm(4B)&\rm(5)&\rm(6)&\rm(8A)&\rm(8B)&\rm(10)&\rm(12A)&\rm(12B)\cr
  \rm size&1&1&20&20&30&24&20&30&30&24&20&20\cr
\hline
  U_{1}&1&1&1&-1&1&1&1&-1&-1&1&-1&-1\cr
  U_{4,1}&4&4&1&-2&0&-1&1&0&0&-1&1&1\cr
  U_{4,2}&4&4&1&2&0&-1&1&0&0&-1&-1&-1\cr
   U_{5,1}&5&5&-1&-1&1&0&-1&1&1&0&-1&-1\cr
  U_{5,2}&5&5&-1&1&1&0&-1&-1&-1&0&1&1\cr
  U_{6}&6&6&0&0&-2&1&0&0&0&1&0&0\cr
  W_{8,1}&8&-8&-4&0&0&-2&4&0&0&2&0&0\cr
  W_{8,2}&8&-8&2&0&0&-2&-2&0&0&2&2\sqrt{3}&-2\sqrt{3}\cr
  W_{8,3}&8&-8&2&0&0&-2&-2&0&0&2&-2\sqrt{3}&2\sqrt{3}\cr
  W_{12,1}&12&-12&0&0&0&2&0&-2\sqrt{2}&2\sqrt{2}&-2&0&0\cr
  W_{12,2}&12&-12&0&0&0&2&0&2\sqrt{2}&-2\sqrt{2}&-2&0&0\cr
\end{array}
$$
\caption{\label{table:characters}Character table of $\SL(2,5).C_2$.}
\end{table}
\subsection{Fixed point dimensions for nontrivial irreducible $\mathbb{R}\SL(2,5).C_2$-modules}\label{subsection:dims}
The conjugacy classes of subgroups of $\SL(2,5).C_2$ are described in the poset below (see \cite{GroupNames}). The classes are denoted by their representatives (either by a well-known alias or by the id from the GAP \cite{GAP4} $\SmallGroup$ library). If there is more than one class with isomorphic representatives, then we add an appropriate letter to distinguish between these classes. The normal subgroups are marked by blue color.
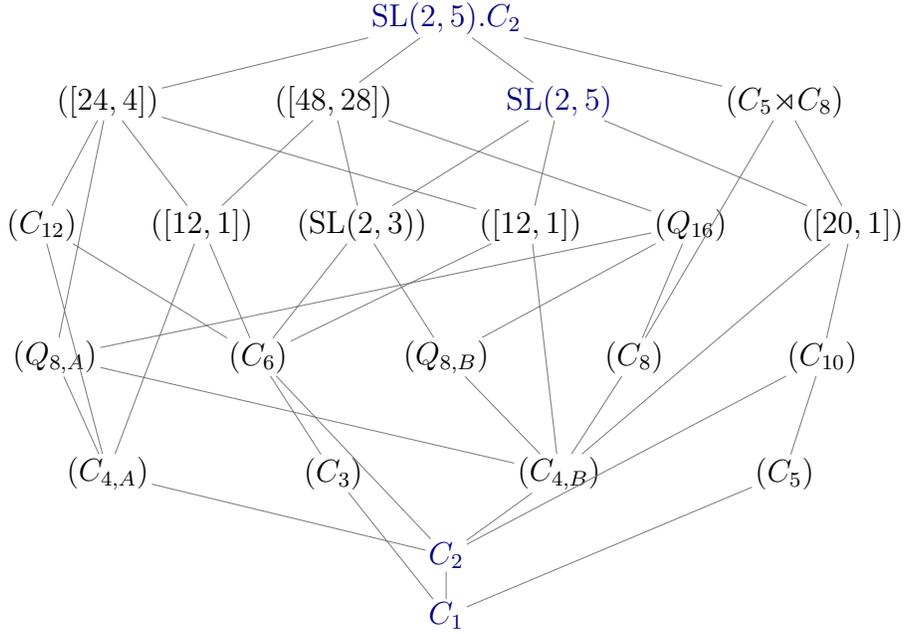
\begin{figure}[H]
$$
\begin{tikzpicture}[scale=1.0,sgplattice]
  \node[char] at (5.5,0) (1) {\gn{C1}{C_1}};
  \node[char] at (5.5,0.803) (2) {\gn{C2}{C_2}};
  \node at (4,1.89) (3) {\gn{C3}{(C_3)}};
  \node at (10,1.89) (4) {\gn{C5}{(C_5)}};
  \node at (1,1.89) (5) {\gn{C4}{(C_{4,A})}};
  \node at (7,1.89) (6) {\gn{C4}{(C_{4,B})}};
  \node at (3,3.44) (7) {\gn{C6}{(C_6)}};
  \node at (10.5,3.44) (8) {\gn{C10}{(C_{10})}};
  \node at (5.5,3.44) (9) {\gn{Q8}{(Q_{8,B})}};
  \node at (0.3,3.44) (10) {\gn{Q8}{(Q_{8,A})}};
  \node at (8,3.44) (11) {\gn{C8}{(C_8)}};
  \node at (2.25,5.2) (12) {\gn{Dic3}{([12,1])}};
  \node at (6.62,5.2) (13) {\gn{Dic3}{([12,1])}};
  \node at (0.125,5.2) (14) {\gn{C12}{(C_{12})}};
  \node at (10.9,5.2) (15) {\gn{Dic5}{([20,1])}};
  \node at (8.75,5.2) (16) {\gn{Q16}{(Q_{16})}};
  \node at (4.38,5.2) (17) {\gn{SL(2,3)}{(\SL(2,3))}};
  \node at (1,6.84) (18) {\gn{Dic6}{([24,4])}};
  \node at (10,6.84) (19) {\gn{C5:C8}{(C_5{\rtimes}C_8})};
  \node at (4,6.84) (20) {\gn{CSU(2,3)}{([48,28])}};
  \node[char] at (7,6.84) (21) {\gn{SL(2,5)}{\SL(2,5)}};
  \node[char] at (5.5,7.94) (22) {\gn{CSU(2,5)}{\SL(2,5).C_2}};
  \draw[lin] (1)--(2) (1)--(3) (1)--(4) (2)--(5) (2)--(6) (2)--(7) (3)--(7)
     (2)--(8) (4)--(8) (6)--(9) (5)--(10) (6)--(10) (6)--(11) (5)--(12)
     (7)--(12) (6)--(13) (7)--(13) (5)--(14) (7)--(14) (6)--(15) (8)--(15)
     (9)--(16) (10)--(16) (11)--(16) (7)--(17) (9)--(17) (12)--(18) (13)--(18)
     (14)--(18) (10)--(18) (15)--(19) (11)--(19) (12)--(20) (16)--(20) (17)--(20)
     (13)--(21) (15)--(21) (17)--(21) (18)--(22) (19)--(22) (20)--(22) (21)--(22);

\end{tikzpicture}
$$
\caption{\label{poset:characters}Subgroups of $\SL(2,5).C_2$.}
\end{figure}
Let us show first that $\SL(2,5).C_2$ is indeed isomorphic to the group $\TL_2(5)$ mentioned in \cite[Remark on p. 154]{Laitinen1986}. To this aim, we have to show that $\SL(2,5).C_2$ possesses a free representation. It turns out that the real irreducible $\SL(2,5).C_2$-representation $W_{8,1}$ is free. This follows from the proposition below, since the only primes dividing $240$, which is the order of $\SL(2,5).C_2$, are $2$, $3$, and $5$.
\begin{proposition}\label{proposition:primeCyclic}
Suppose $\mathcal{U}_4=\{U_{4,k}| k=1,2\}$, $\mathcal{U}_5=\{U_{5,k}|k=1,2\}$, $\mathcal{W}_8=\{W_{8,k}|k=1,2,3\}$, $\mathcal{W}_{12}=\{W_{12,k}|k=1,2\}$. Then, for any $U_4\in\mathcal{U}_4$, $U_5\in\mathcal{U}_5$, $W_8\in\mathcal{W}_8$, $W_{12}\in\mathcal{W}_{12}$, $W\in\mathcal{W}_8\cup \mathcal{W}_{12}$, and $i\in\{2,3\}$, the following equalities hold.
$$
\dim W^{C_2}=\dim W_{8,1}^{C_3}=\dim W_{8}^{C_5}=\dim W_{12}^{C_2}=0, 
$$
$$
\dim U_{1}^{C_2}=\dim U_1^{C_3}=\dim U_1^{C_5}=\dim U_{5}^{C_3}=\dim U_{5}^{C_5}=1,
$$
$$
\dim U_{4}^{C_3}=\dim U_6^{C_3}=\dim U_6^{C_5}=2,
$$
$$
\dim U_{4}^{C_2}=\dim W_{8,i}^{C_3}=\dim W_{12}^{C_3}=\dim W_{12}^{C_5}=4,
$$
$$
\dim U_{5}^{C_2}=5,\text{ and }\dim U_6^{C_2}=6.
$$
\end{proposition}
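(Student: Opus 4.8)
The plan is to compute every one of these fixed-point dimensions by the standard character-theoretic identity
$$\dim W^{H}=\langle \chi_W|_H,\,1_H\rangle=\frac{1}{|H|}\sum_{h\in H}\chi_W(h),$$
which is valid for real representations because the trivial $\mathbb{R}H$-module is absolutely irreducible, so the right-hand side counts exactly its multiplicity in $W|_H$, i.e.\ $\dim W^H$; moreover every character in Table \ref{table:characters} is real-valued, so no complex-conjugation subtleties arise. In this way the whole statement reduces to reading values off Table \ref{table:characters}, once one knows to which conjugacy classes of $\SL(2,5).C_2$ the nontrivial elements of a given subgroup $C_2$, $C_3$, or $C_5$ belong.

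The key observation I would make first is that for each prime $p\in\{2,3,5\}$ the group $\SL(2,5).C_2$ has a \emph{unique} conjugacy class of elements of order $p$, namely the classes labelled $2$, $3$ and $5$ in Table \ref{table:characters}; in particular the class $2$ has size $1$ and thus consists of the central involution (recall $Z\cong C_2$). Consequently, for any subgroup $C_p\leq\SL(2,5).C_2$ of order $p$ — such subgroups all appear in the lattice of Figure \ref{poset:characters} — its $p-1$ nontrivial elements lie in that single class $pX$, so that
$$\dim W^{C_p}=\frac{1}{p}\Bigl(\dim W+(p-1)\,\chi_W(pX)\Bigr)$$
for every irreducible module $W$ in our list; in particular the answer is independent of the chosen $C_p$.

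It then remains to substitute the table entries, which is one-line arithmetic in each case. For the faithful modules one gets $\dim W^{C_2}=\tfrac12(\dim W-\dim W)=0$ for all $W\in\mathcal{W}_8\cup\mathcal{W}_{12}$; $\dim W_{8,1}^{C_3}=\tfrac13(8-8)=0$ while $\dim W_{8,i}^{C_3}=\tfrac13(8+4)=4$ for $i=2,3$; $\dim W_8^{C_5}=\tfrac15(8-8)=0$; $\dim W_{12}^{C_3}=\tfrac13(12+0)=4$ and $\dim W_{12}^{C_5}=\tfrac15(12+8)=4$. For the modules inflated from $S_5$ the computations are of the same type: $\dim U_4^{C_2}=\tfrac12(4+4)=4$, $\dim U_5^{C_2}=5$, $\dim U_6^{C_2}=6$, $\dim U_4^{C_3}=\tfrac13(4+2)=2$, $\dim U_6^{C_3}=\tfrac13(6+0)=2$, $\dim U_6^{C_5}=\tfrac15(6+4)=2$, $\dim U_5^{C_3}=\tfrac13(5-2)=1$, $\dim U_5^{C_5}=\tfrac15(5+0)=1$, and $\dim U_1^{C_p}=1$ for $p=2,3,5$; the remaining equalities are checked identically.

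The only genuinely non-computational input is the identification of the conjugacy classes of prime-order elements and of the centre, and this is the step I expect to be the "hard part" in the sense that it is the one requiring external knowledge of the structure of $\SL(2,5).C_2$ (cf.\ \cite{GroupNames}); if one wishes to keep the argument self-contained, it can be extracted from the character table itself, which already displays a single class of elements of each order $2$, $3$, $5$. Once that is settled, there is no real obstacle: the proposition is a finite bookkeeping exercise over Table \ref{table:characters}.
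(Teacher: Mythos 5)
Your proposal is correct and takes essentially the same route as the paper: both reduce everything to the fixed-point-dimension character formula together with the observation that $\SL(2,5).C_2$ has a unique conjugacy class of elements of each order $2$, $3$, $5$ (the paper phrases the $C_2$ case via $\Ker U=C_2$ and faithfulness of $W$, which is the same fact since the order-$2$ class is the central involution). The arithmetic in your substitutions checks out against Table \ref{table:characters}.
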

\begin{proof}
The equalities concerning the subgroup $C_2\leq \SL(2,5).C_2$ are straightforward as $\Ker U=C_2$ for $U=U_k$ for $k=1,4,5,6$ and $W$ is a faithful representation.

Since there are unique conjugacy classes of elements of order $3$, $5$, we get the desired equalities from Table \ref{table:characters} by using the formula for the fixed point dimension,
$$
\dim V^H=\frac{1}{|H|}\sum_{h\in H}\mychi_V(h),
$$
where $H\leq G$, $V$ is an $\mathbb{R}G$-module and $\mychi_V$ is its character.
\end{proof}
The next proposition provides fixed point dimensions for $4$ conjugacy classes of subgroups which are of our particular interest.
\begin{proposition}\label{proposition:mainSubgroupsDims}
Assuming the notations from Proposition \ref{proposition:primeCyclic}, and $H\in(C_{4,A})\cup(Q_{8,A})\cup(Q_{16})\cup([24,4])$, the following equalities hold.
$$
\dim U_1^H=\dim W^H=\dim U_{4,1}^{Q_{8,A}}=\dim U_{4,1}^{Q_{16}}=\dim U_{4,1}^{[24,4]}=\dim U_{5,1}^{[24,4]}=\dim U_6^{Q_{16}}=\dim U_6^{[24,4]}=0,
$$
$$
\dim U_{4,1}^{C_{4,A}}=\dim U_{4,2}^{Q_{16}}=\dim U_{4,2}^{[24,4]}=\dim U_{5,1}^{Q_{8,A}}=\dim U_{5,1}^{Q_{16}}=
\dim U_{5,2}^{Q_{16}}=\dim U_{5,2}^{[24,4]}=\dim U_6^{Q_{8,A}}=1,
$$
$$
\dim U_{4,2}^{Q_{8,A}}=\dim U_{5,1}^{C_{4,A}}=\dim U_{5,2}^{Q_{8,A}}=2,\text{ and }\dim U_{4,2}^{C_{4,A}}=\dim U_{5,2}^{C_{4,A}}=\dim U_6^{C_{4,A}}=3.
$$
\end{proposition}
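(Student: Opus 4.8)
\emph{Proof idea.} The plan is to apply the Frobenius fixed-point formula $\dim V^H=\frac{1}{|H|}\sum_{h\in H}\chi_V(h)$ of Proposition \ref{proposition:primeCyclic} to each of the four subgroups $H\in(C_{4,A})\cup(Q_{8,A})\cup(Q_{16})\cup([24,4])$ and to each module $V$ among $U_1,U_{4,1},U_{4,2},U_{5,1},U_{5,2},U_6$ and $W\in\mathcal{W}_8\cup\mathcal{W}_{12}$. Beyond Table \ref{table:characters} the only input needed is, for each such $H$, the multiset of $\SL(2,5).C_2$-conjugacy classes met by the elements of $H$; once this is known the equalities reduce to a finite arithmetic check (also verifiable in GAP \cite{GAP4}).

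Two families are immediate. For every $W\in\mathcal{W}_8\cup\mathcal{W}_{12}$ the table gives $\chi_W(2)=-\dim W$, so the generator $z$ of the centre $Z\cong C_2$ acts on $W$ as $-\id$ and $W^Z=0$; since $\SL(2,5).C_2$ has a single class of involutions (class $2$ has size $1$), $Z$ is its unique subgroup of order $2$, and each of $C_{4,A}\cong C_4$, $Q_{8,A}\cong Q_8$, $Q_{16}$, $[24,4]\cong\Dic_6$ contains it, whence $W^H\subseteq W^Z=0$. Next, $U_1$ is the inflation of the sign character of $\SL(2,5).C_2/Z\cong S_5$, with kernel the index-two subgroup $\SL(2,5)$; reading off Figure \ref{poset:characters} that none of $C_{4,A},Q_{8,A},Q_{16},[24,4]$ is contained in $\SL(2,5)$ gives $\dim U_1^H=0$ in all cases.

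For the remaining modules $U_{4,k},U_{5,k},U_6$, all inflated from $S_5$, I would read the class distributions off the subgroup lattice together with power maps. Every order-$4$ element of $\SL(2,5).C_2$ squares to $z$, and classes $4A$, $4B$ are distinguished by $U_1$ (value $-1$ on $4A$ and $+1$ on $4B$); since a $C_{4,B}$-subgroup is the square of a $C_8$ or sits inside $\SL(2,5)=\Ker U_1$, the relation $U_1(g^2)=U_1(g)^2$ shows its generators lie in $4B$, so those of a $C_{4,A}$-subgroup lie in $4A$, and then $\dim U_1^{Q_{8,A}}=0$ forces the six order-$4$ elements of $Q_{8,A}$ to split as $4$ in $4A$ and $2$ in $4B$. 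Propagating this through the inclusions of Figure \ref{poset:characters} --- each of $Q_{16}$ and $[24,4]$ containing a $Q_{8,A}$, a $C_8$ resp.\ a $C_{12}$, and the pertinent $[12,1]$'s, with all order-$4$ elements of $\SL(2,5)$ in $4B$ --- one obtains that $Q_{16}$ contributes $1,1$ to classes $1,2$, $4$ to $4A$, $6$ to $4B$ and $4$ elements of order $8$, while $[24,4]$ contributes $1,1$ to classes $1,2$, $2$ to class $3$, $8$ to $4A$, $6$ to $4B$, $2$ to class $6$ and $4$ elements of order $12$. The order-$8$ and order-$12$ parts split evenly between the Galois-conjugate pairs $\{8A,8B\}$ and $\{12A,12B\}$, so the $\sqrt{2}$- and $\sqrt{3}$-terms cancel and the sums come out rational, as they must; feeding everything into the Frobenius formula gives the stated dimensions.

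The main obstacle is this last step: Table \ref{table:characters} by itself does not determine how the fourteen order-$4$ elements of $[24,4]\cong\Dic_6$ (and the ten of $Q_{16}$) distribute over $4A$ and $4B$. Resolving it requires combining the containments of Figure \ref{poset:characters}, the power-map identity $U_1(g^k)=U_1(g)^k$, and the fact that $\SL(2,5)=\Ker U_1$ absorbs all its order-$4$ elements into class $4B$, together with careful bookkeeping of which $C_4$-, $Q_8$- and $\Dic_3$-subgroups of $\Dic_6$ and $Q_{16}$ realise each $\SL(2,5).C_2$-class.
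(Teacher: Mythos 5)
Your proposal is correct and rests on the same engine as the paper's proof --- the averaging formula $\dim V^H=\tfrac{1}{|H|}\sum_{h\in H}\mychi_V(h)$ applied with the character table and the subgroup poset --- but it resolves the one genuine difficulty differently. The obstacle you correctly identify (Table \ref{table:characters} alone does not say how the order-$4$ elements of $Q_{8,A}$, $Q_{16}$ and $[24,4]$ distribute over the classes $(4A)$ and $(4B)$) is handled in the paper \emph{without} ever determining that distribution: knowing only that each $H$ contains at least one $(C_{4,A})$-representative (hence at least two elements of class $(4A)$), the paper plugs in the extremal distributions to get upper and lower bounds that sandwich the integer $\dim V^H$ between consecutive integers. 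Your route instead pins the split exactly --- via $\Ker U_1=\SL(2,5)$, the power-map identity $U_1(g^2)=U_1(g)^2$ for $g$ of order $8$, and the forced equation $\dim U_1^H=0$, which (since $a+b$ is known) determines $a$ and $b$ outright; the counts you state ($4{:}2$ for $Q_{8,A}$, $4{:}6$ for $Q_{16}$, $8{:}6$ for $[24,4]$) are correct and reproduce every entry of the proposition, and in fact once you have $\dim U_1^H=0$ you do not need the laborious subgroup-by-subgroup "propagation" you describe. Your identification that the $(C_{4,A})$-subgroups are generated by $(4A)$-elements also replaces the paper's computation $\dim U_1^{\SL(2,3)}=1$ versus $\dim U_1^{C_{4,A}'}=0$ with a poset-plus-power-map argument; both are legitimate. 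The only cosmetic excess is the discussion of the $8A/8B$ and $12A/12B$ splits: all the modules $U_\ast$ take equal values on each Galois pair and the faithful modules $W$ are already killed by the central involution, so those splits never enter the computation. Net effect: your exact-count approach is slightly more information-hungry but yields the same numbers; the paper's sandwiching is the cheaper way past the same obstacle.
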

\begin{proof}
The equality $\dim W^H=0$ follows immediately from Proposition \ref{proposition:primeCyclic}. 

Let $C_{4,A}'$ be the cyclic subgroup of order $4$, generated by an element from the conjugacy class $(4A)$. It follows from Table \ref{table:characters} that
$$
1>\frac{1}{4}\cdot(1-1+1+1)\geq \dim U_1^{C_{4,A}'}\geq 0.
$$
Thus $\dim U_1^{C_{4,A}'}=0$. On the other hand,
$$
1\geq \dim U_1^{\SL(2,3)}\geq\frac{1}{24}\cdot(1+1+8\cdot 1+6\cdot(-1)+8\cdot 1)>0,
$$
as $\SL(2,3)$ contains precisely: one element of order $1$ and $2$, eight elements of order $3$, six elements of order $4$, and eight elements of order $6$. Hence, $\dim U_1^{\SL(2,3)}=1$, which in connection with $\dim U_1^{C_{4,A}'}=0$ and Figure \ref{poset:characters} yields $C_{4,A}'\in(C_{4,A})$. Hence $\dim U_1^H=0$ as $H$ contains as a subgroup a representative of the class $(C_{4,A})$. Moreover, as we know now that the elements of $C_{4,A}$ of order $2$ belong to $(4A)$, all equalities involving $C_{4,A}$ follow directly from Table \ref{table:characters} and we skip it.

The subgroup $Q_{8,A}$ is isomorphic to the quaternion group, and therefore contains one element of order $1$ and $2$, and six elements of order $4$ (moreover, at least two of them are contained in the conjugacy class of elements $(4A)$ since a representative of $(C_{4,A})$ is contained in $(Q_{8,A})$). Thus,
$$
\dim U_{4,1}^{Q_{8,A}}\leq \frac{1}{8}\cdot (4+4+2\cdot(-2)+4\cdot 0)<1,
$$
which yields $\dim U_{4,1}^{Q_{8,A}}=\dim U_{4,1}^{Q_{16}}=\dim U_{4,1}^{[24,4]}=0$. The subgroup $Q_{16}$ contains one element of order $1$ and $2$, ten elements of order $4$, and four elements of order $8$, while $[24,4]$ contains one element of order $1$ and $2$, two elements of order $3$, fourteen elements of order $4$, two elements of order $6$, and four elements of order $12$. Using this knowledge, The equalities concerning $U_{5,2}$ can be derived directly from Table \ref{table:characters}, just as in the proof of Proposition \ref{proposition:primeCyclic}. We skip these computations. Moreover,
$$
\dim U_{5,1}^{[24,4]}<\frac{1}{24}\cdot(5+5+2\cdot (-1)+14\cdot 1+2\cdot(-1)+4\cdot (-1))<1,
$$
$$
\dim U_6^{Q_{16}}<\frac{1}{16}\cdot(6+6+10\cdot 0+4\cdot 0)<1,
$$
and
$$
\dim U_6^{[24,4]}<\frac{1}{24}\cdot(6+6+2\cdot 0+14\cdot 0+2\cdot 0+4\cdot 0)<1,
$$
which means that $\dim U_{5,1}^{[24,4]}=\dim U_6^{Q_{16}}=\dim U_6^{[24,4]}=0$.

Further proof is the repetition of the already used arguments. We are left with the proofs of equalities concerning dimensions which are to be equal $1$ or $2$. The following inequalities conclude then the proof.
$$
2>\frac{1}{16}\cdot(4+4+10\cdot 2+4\cdot 0)\geq \dim U_{4,2}^{Q_{16}}\geq\frac{1}{16}\cdot(4+4+10\cdot 0+4\cdot 0)>0.
$$
$$
2>\frac{1}{24}\cdot(4+4+2\cdot 1+14\cdot 2+2\cdot 1+4\cdot(-1))\geq\dim U_{4,2}^{[24,4]}\geq\frac{1}{24}\cdot(4+4+2\cdot 1+14\cdot 0+2\cdot 1+4\cdot(-1))>0,
$$
$$
2>\frac{1}{8}\cdot(5+5+2\cdot(-1)+4\cdot 1)\geq \dim U_{5,1}^{Q_{8,A}}\geq \frac{1}{8}\cdot(5+5+6\cdot(-1))>0,
$$
$$
1=\dim U_{5,1}^{Q_{8,A}}\geq \dim U_{5,1}^{Q_{16}}\geq\frac{1}{16}\cdot(5+5+10\cdot(-1)+4\cdot 1)>0,
$$
$$
2>\frac{1}{8}\cdot(6+6+6\cdot 0)\geq\dim U_6^{Q_{8,A}}\geq\frac{1}{8}\cdot(6+6+2\cdot 0+4\cdot(-2))>0,
$$
$$
3>\frac{1}{8}\cdot(4+4+6\cdot 2)\geq \dim U_{4,2}^{Q_{8,A}}\geq\frac{1}{8}\cdot(4+4+2\cdot 2+4\cdot 0)>1.
$$
\end{proof}
Another important fact is that the fixed point set, $U_1^{\SL(2,5)}$, has positive dimension. The group $\SL(2,5)$ contains one element of order $1$ and $2$, twenty elements of order $3$, thirty elements of order $4$, twenty four elements of order $5$, twenty elements of order $6$, and twenty four elements of order $10$. Hence,
$$
\dim U_1^{\SL(2,5)}\geq\frac{1}{120}\cdot(1+1+20\cdot 1+30\cdot(-1)+24\cdot 1+20\cdot 1+24\cdot 1)>0,
$$
which means that $\dim U_1^{\SL(2,5)}=1$.
\subsection{Suitable generating subgroups of $\SL(2,5).C_2$}
Recall that we denoted the centre of $\SL(2,5).C_2$ by $Z$. In the following two lemmas, let $\pi:\SL(2,5).C_2\rightarrow \SL(2,5).C_2/Z\cong S_5$ be the quotient epimorphism. As there shall be no confusion, from now on, we identify $\SL(2,5).C_2/Z$ with $S_5$, the group of all permutations on $5$ letters. The following figure is the poset of conjugacy classes of subgroups of $S_5$. As in the case of Figure \ref{poset:characters}, we mark the normal subgroups by blue color. In the figure below, $D_{2n}$ denotes the dihedral group of order $2n$, and $F_5$ is the Frobenius group of order $20$. This poset can be also found at \cite{GroupNames}. The convention for notation is the same as in Figure \ref{poset:characters}.
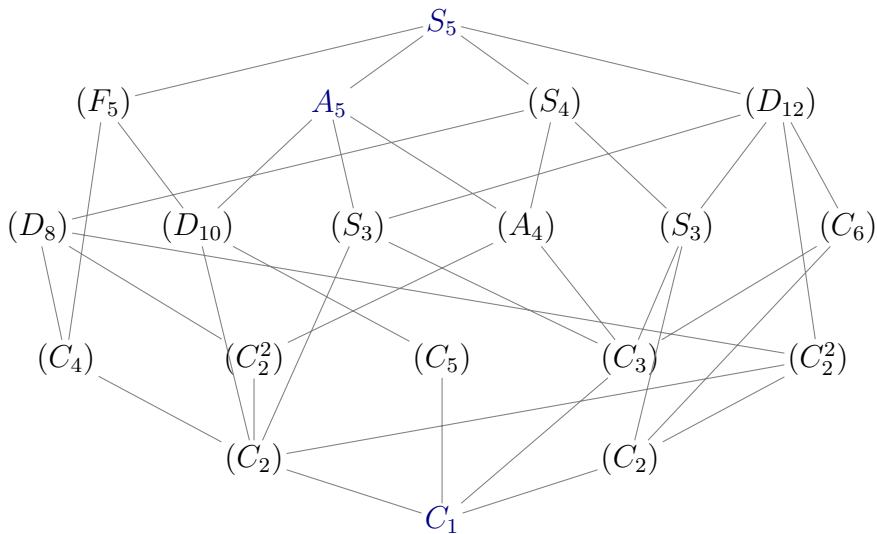
\begin{figure}[H]
	$$
	\begin{tikzpicture}[scale=1.0,sgplattice]
	\node[char] at (5.5,0) (1) {\gn{C1}{C_1}};
	\node at (8,0.803) (2) {\gn{C2}{(C_2)}};
	\node at (3,0.803) (3) {\gn{C2}{(C_2)}};
	\node at (8,2.14) (4) {\gn{C3}{(C_3)}};
	\node at (5.5,2.14) (5) {\gn{C5}{(C_5)}};
	\node at (3,2.14) (6) {\gn{C2^2}{(C_2^2)}};
	\node at (10.5,2.14) (7) {\gn{C2^2}{(C_2^2)}};
	\node at (0.5,2.14) (8) {\gn{C4}{(C_4)}};
	\node at (10.9,3.89) (9) {\gn{C6}{(C_6)}};
	\node at (8.75,3.89) (10) {\gn{S3}{(S_3)}};
	\node at (4.38,3.89) (11) {\gn{S3}{(S_3)}};
	\node at (2.25,3.89) (12) {\gn{D5}{(D_{10})}};
	\node at (0.125,3.89) (13) {\gn{D4}{(D_8)}};
	\node at (6.62,3.89) (14) {\gn{A4}{(A_4)}};
	\node at (10,5.54) (15) {\gn{D6}{(D_{12})}};
	\node at (1,5.54) (16) {\gn{F5}{(F_5)}};
	\node at (7,5.54) (17) {\gn{S4}{(S_4)}};
	\node[char] at (4,5.54) (18) {\gn{A5}{A_5}};
	\node[char] at (5.5,6.62) (19) {\gn{S5}{S_5}};
	\draw[lin] (1)--(2) (1)--(3) (1)--(4) (1)--(5) (3)--(6) (2)--(7) (3)--(7)
	(3)--(8) (2)--(9) (4)--(9) (2)--(10) (4)--(10) (3)--(11) (4)--(11)
	(3)--(12) (5)--(12) (6)--(13) (7)--(13) (8)--(13) (4)--(14) (6)--(14)
	(7)--(15) (9)--(15) (10)--(15) (11)--(15) (12)--(16) (8)--(16) (13)--(17)
	(14)--(17) (10)--(17) (12)--(18) (14)--(18) (11)--(18) (15)--(19) (16)--(19)
	(17)--(19) (18)--(19);

	\end{tikzpicture}
	$$
	\caption{\label{poset:S5}Subgroups of $S_5$.}
\end{figure}

Define the subgroups $L,K_1,K_2\leq S_5$ as follows.
$$
L=\langle(\begin{matrix}
1 &  3 
\end{matrix})\rangle\cong C_2\text{, }K_1=\langle (\begin{matrix}
1 & 3
\end{matrix}),(\begin{matrix}
4 & 5
\end{matrix})\rangle\cong C_2^2\text{ and }K_2=\langle (\begin{matrix}
1 & 2 & 3 & 4
\end{matrix}),(\begin{matrix}
1 & 2 
\end{matrix})(\begin{matrix}
3 & 4
\end{matrix})\rangle.
$$
\begin{lemma}\label{lemma:appropriateGroups}
	We have $K_2\cong D_8$ and $L=K_1\cap K_2$. 
\end{lemma}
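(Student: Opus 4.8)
Both assertions are elementary and follow by direct computation with permutations; the plan is simply to organize that computation.

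First I would identify $K_2$ with the dihedral group of order $8$ by exhibiting the standard presentation. Put $\sigma=(1\ 2\ 3\ 4)$ and $\tau=(1\ 2)(3\ 4)$, so $\sigma$ has order $4$ and $\tau$ has order $2$. Conjugating $\sigma$ by $\tau$ relabels its cycle via $1\mapsto 2,\ 2\mapsto 1,\ 3\mapsto 4,\ 4\mapsto 3$, giving $\tau\sigma\tau^{-1}=(2\ 1\ 4\ 3)=(1\ 4\ 3\ 2)=\sigma^{-1}$. Hence $K_2=\langle\sigma,\tau\rangle$ is a homomorphic image of $D_8$. To rule out any collapse, observe that $\langle\sigma\rangle$ already has order $4$ while $\tau\notin\langle\sigma\rangle$, since the unique involution in $\langle\sigma\rangle$ is $\sigma^2=(1\ 3)(2\ 4)\neq\tau$; therefore $|K_2|\geq 8$ and so $K_2\cong D_8$. (Equivalently, one lists the eight elements $\{e,\sigma,\sigma^2,\sigma^3,\tau,\sigma\tau,\sigma^2\tau,\sigma^3\tau\}$ explicitly.)

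Next I would check $L\subseteq K_1\cap K_2$. The inclusion $L\subseteq K_1$ is immediate, as $(1\ 3)$ is a defining generator of $K_1$. For $L\subseteq K_2$ it suffices to note that among the products $\sigma\tau$ and $\tau\sigma$ one equals the transposition $(1\ 3)$ and the other equals $(2\ 4)$ (a one-line computation, independent of the composition convention); in particular $(1\ 3)\in K_2$. Since $L=\{e,(1\ 3)\}$, this gives $L\subseteq K_1\cap K_2$.

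Finally I would establish the reverse inclusion by a support argument. Every element of $K_2$ is a word in $\sigma$ and $\tau$, each of which fixes the symbol $5$, so every element of $K_2$ fixes $5$; since the generator $(4\ 5)$ of $K_1$ moves $5$, we have $K_1\not\subseteq K_2$. As $K_1\cong C_2^2$ and $K_1\cap K_2$ is then a proper subgroup of $K_1$, we get $|K_1\cap K_2|\leq 2$, and combined with $L\subseteq K_1\cap K_2$ and $|L|=2$ this forces $K_1\cap K_2=L$. The only point requiring a moment's care is the order count for $K_2$ in the first step — one must exclude $|K_2|=4$ — but the remark $\tau\neq\sigma^2$ settles it, and the rest is routine.
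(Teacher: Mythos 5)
Your proof is correct and follows essentially the same route as the paper: exhibit the $D_8$ presentation on $\sigma=(1\,2\,3\,4)$ and $\tau=(1\,2)(3\,4)$, observe that a product of the two generators equals $(1\,3)$ so that $L\leq K_1\cap K_2$, and use $(4\,5)\in K_1\setminus K_2$ to force equality. The only cosmetic differences are that you establish injectivity of the surjection $D_8\rightarrow K_2$ by an order count ($\tau\notin\langle\sigma\rangle$ gives $|K_2|\geq 8$) where the paper computes the kernel element by element, and that your support argument makes explicit why $(4\,5)\notin K_2$, which the paper merely asserts.
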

\begin{proof}
	Let $D_8=\langle a,b|a^4=b^2=1,bab=a^{-1}\rangle$. Define $\varphi:D_8\rightarrow K_2$ by $\varphi(a)=(\begin{matrix}
	1 & 2&3&4 
	\end{matrix})$ and $\varphi(b)=(\begin{matrix}
	1 & 2
	\end{matrix})(\begin{matrix}
	3 & 4
	\end{matrix})$. Then
	$$
	\varphi(a^4)=\varphi(b^2)=1
	$$
	and
	$$
	\varphi(bab)=\varphi(b)\varphi(a)\varphi(b)=(\begin{matrix}
	1 & 2
	\end{matrix})(\begin{matrix}
	3&4 
	\end{matrix})(\begin{matrix}
	1 & 2&3&4 
	\end{matrix})(\begin{matrix}
	1 & 2 
	\end{matrix})(\begin{matrix}
	3&4 
	\end{matrix})=(\begin{matrix}
	1 & 4&3&2 
	\end{matrix})=\varphi(a^{-1}).
	$$
	Therefore $\varphi$ is a well-defined homomorphism. As it maps generators to generators, it is an epimorphism. Now, we have
	$$
	\Ker\varphi=\{b^{\varepsilon}a^k|\varphi(b^{\varepsilon}a^k)=\id\}=\{b^{\varepsilon}a^k|((\begin{matrix}
	1&2 
	\end{matrix})(\begin{matrix}
	3&4 
	\end{matrix}))^{\varepsilon}(\begin{matrix}
	1&2&3&4 
	\end{matrix})^k=\id\}.
	$$
	Notice that $(\begin{matrix}
	1&2&3&4 
	\end{matrix})^k=\id$ iff $4|k$. Thus, if $\varepsilon=0$, then $\varphi(b^{\varepsilon}a^k)=\id$ if and only if $b^{\varepsilon}a^k=1$. In case $\varepsilon=1$, we have
	$$
	\varphi(b^{\varepsilon})=\varphi(b)=(\begin{matrix}
	1&2 
	\end{matrix})(\begin{matrix}
	3&4 
	\end{matrix}),
	$$
	$$
	\varphi(b^{\varepsilon}a)=\varphi(ba)=(\begin{matrix}
	1&2 
	\end{matrix})(\begin{matrix}
	3&4 
	\end{matrix})(\begin{matrix}
	1&2&3&4 
	\end{matrix})=(\begin{matrix}
	2&4 
	\end{matrix}),
	$$
	$$
	\varphi(b^{\varepsilon}a^2)=\varphi(ba^2)=\varphi(ba)\varphi(a)=(\begin{matrix}
	2&4 
	\end{matrix})(\begin{matrix}
	1&2&3&4 
	\end{matrix})=(\begin{matrix}
	1&4 
	\end{matrix})(\begin{matrix}
	2&3 
	\end{matrix}),
	$$
	$$
	\varphi(b^{\varepsilon}a^3)=\varphi(ba^3)=\varphi(ba^2)\varphi(a)=(\begin{matrix}
	1&4 
	\end{matrix})(\begin{matrix}
	2&3 
	\end{matrix})(\begin{matrix}
	1&2&3&4 
	\end{matrix})=(\begin{matrix}
	1&3 
	\end{matrix}).
	$$
	Thus $\varphi(b^{\varepsilon}a^k)=\id$ if and only if $b^{\varepsilon}a^k=1$, whence $\varphi$ has the trivial kernel. Therefore $\varphi$ is an isomorphism and $K_2\cong D_8$.
	
	We show now that $L=K_1\cap K_2$. Notice that 
	$$
	(\begin{matrix}
	1 & 2 & 3 & 4
	\end{matrix})(\begin{matrix}
	1 & 2 
	\end{matrix})(\begin{matrix}
	3 & 4
	\end{matrix})=(\begin{matrix}
	1 & 3
	\end{matrix}).
	$$
	Thus $L\leq K_1\cap K_2$. This shows (as $|K_1|=4$) that $|K_1\cap K_2|\in\{2,4\}$, whence $K_1\cap K_2=\langle (\begin{matrix}
	1 & 3
	\end{matrix})\rangle$ or $K_1\cap K_2=K_1$. On the other hand, $ (\begin{matrix}
	4 & 5
	\end{matrix})\in K_1\setminus K_2$, whence $K_1\cap K_2=\langle (\begin{matrix}
	1 & 3
	\end{matrix})\rangle=L$.
\end{proof}
\begin{lemma}\label{lemma:apropriateGroups2}
	Using the notations from Figure \ref{poset:characters},
	$$
	\pi^{-1}(L)\in (C_{4,A})\text{, }\pi^{-1}(K_1)\in(Q_{8,A})\text{, and }  \pi^{-1}(K_2)\in(Q_{16}).
	$$
	Moreover,
	$$
	\pi^{-1}(L)=\pi^{-1}(K_1)\cap\pi^{-1}(K_2)\text{ and }\langle\pi^{-1}(K_1),\pi^{-1}(K_2)\rangle=\SL(2,5).C_2.
	$$ 
\end{lemma}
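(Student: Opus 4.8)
The plan is to push everything through the quotient map $\pi\colon\SL(2,5).C_2\to S_5$, whose kernel is $Z\cong C_2$; then $\pi^{-1}(H)$ has order $2|H|$ and contains $Z$ for every $H\le S_5$, so $\pi^{-1}(L)$, $\pi^{-1}(K_1)$, $\pi^{-1}(K_2)$ have orders $4$, $8$, $16$. By Figure \ref{poset:characters} the subgroups of $\SL(2,5).C_2$ of these orders fall, up to conjugacy, into $\{(C_{4,A}),(C_{4,B})\}$, $\{(Q_{8,A}),(Q_{8,B}),(C_8)\}$ and $\{(Q_{16})\}$ respectively. Hence $\pi^{-1}(K_2)\in(Q_{16})$ with nothing further to prove; since $\pi^{-1}(K_1)$ surjects onto the non-cyclic group $K_1\cong C_2^2$ it is not cyclic, so $\pi^{-1}(K_1)\notin(C_8)$, leaving $\pi^{-1}(K_1)\in(Q_{8,A})\cup(Q_{8,B})$ and $\pi^{-1}(L)\in(C_{4,A})\cup(C_{4,B})$.

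The real point is to decide $A$ versus $B$ in the last two cases, and for this I would use the module $U_1$. Table \ref{table:characters} shows $\Ker U_1$ has index $2$; since $\SL(2,5)$ is perfect it is the derived subgroup of $\SL(2,5).C_2$, hence its unique index-two subgroup, so both $\Ker U_1$ and $\Ker(\operatorname{sgn}\circ\pi)$ equal $\SL(2,5)$ and therefore $\chi_{U_1}=\operatorname{sgn}\circ\pi$. Now the four elements of $\pi^{-1}(L)\cong C_4$ are $1$, the central involution, and two elements of order $4$ mapping to the transposition $(1\,3)$; the averaging formula $\dim V^H=|H|^{-1}\sum_{h\in H}\chi_V(h)$ together with $\chi_{U_1}=\operatorname{sgn}\circ\pi$ gives $\dim U_1^{\pi^{-1}(L)}=\tfrac14(1+1-1-1)=0$. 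Likewise the six order-$4$ elements of $\pi^{-1}(K_1)\cong Q_8$ map two-to-one onto $\{(1\,3),(4\,5),(1\,3)(4\,5)\}$, with $\operatorname{sgn}$-values $-1,-1,+1$, so $\dim U_1^{\pi^{-1}(K_1)}=\tfrac18(1+1-2-2+2)=0$. On the other hand Figure \ref{poset:characters} places representatives of $(C_{4,B})$ and $(Q_{8,B})$ inside a copy of $\SL(2,3)$, for which $\dim U_1^{\SL(2,3)}=1$ was already computed, so $\dim U_1^{C_{4,B}}\ge 1$ and $\dim U_1^{Q_{8,B}}\ge 1$. Combined with $\dim U_1^{C_{4,A}}=\dim U_1^{Q_{8,A}}=0$ from Proposition \ref{proposition:mainSubgroupsDims}, this forces $\pi^{-1}(L)\in(C_{4,A})$ and $\pi^{-1}(K_1)\in(Q_{8,A})$.

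The remaining two claims are formal. Taking preimages commutes with intersection, so $\pi^{-1}(K_1)\cap\pi^{-1}(K_2)=\pi^{-1}(K_1\cap K_2)=\pi^{-1}(L)$ by Lemma \ref{lemma:appropriateGroups}. For the generation statement, $\pi$ carries $\langle\pi^{-1}(K_1),\pi^{-1}(K_2)\rangle$ onto $\langle K_1,K_2\rangle$; as $(4\,5)\in K_1$ and $(1\,2\,3\,4)\in K_2$, the product $(1\,2\,3\,4)(4\,5)$ is a $5$-cycle, and a $5$-cycle together with a transposition of two of its consecutive points generates $S_5$ (conjugating the transposition by powers of the cycle yields all the basic transpositions), so $\langle K_1,K_2\rangle=S_5$. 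Since $\langle\pi^{-1}(K_1),\pi^{-1}(K_2)\rangle$ contains $Z=\Ker\pi$ and maps onto $S_5=\pi(\SL(2,5).C_2)$, the correspondence theorem gives $\langle\pi^{-1}(K_1),\pi^{-1}(K_2)\rangle=\SL(2,5).C_2$.

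I expect the only genuine obstacle to be the $A$/$B$ identification in the second paragraph; once $\chi_{U_1}$ is recognized as the pulled-back sign character this is a two-line computation, and all the containment facts used are read directly off the two subgroup posets. If one prefers to avoid identifying $\chi_{U_1}$ with $\operatorname{sgn}\circ\pi$, an equivalent shortcut works: $\pi$ maps every conjugate of $\SL(2,3)$ onto a copy of $A_4$, which contains no transposition, so — because both $\pi^{-1}(L)$ and $\pi^{-1}(K_1)$ map onto subgroups containing the transposition $(1\,3)$ — neither can lie in a conjugate of $\SL(2,3)$, and by Figure \ref{poset:characters} this excludes precisely the classes $(C_{4,B})$ and $(Q_{8,B})$.
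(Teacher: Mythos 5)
Your proof is correct, and its hardest step --- separating $(C_{4,A})$, $(Q_{8,A})$ from $(C_{4,B})$, $(Q_{8,B})$ --- is done by a genuinely different route than the paper's. The paper works with the module $U_{5,1}$: it shows that a generator $g$ of a subgroup in $(C_{4,B})$ acts on $U_{5,1}$ preserving orientation, that the transpositions of $S_5$ reverse orientation there (eigenvalues $1,1,-1,-1,-1$), and concludes $\pi(g)\in A_5$, which is incompatible with $\pi(\pi^{-1}(L))=L=\langle(1\,3)\rangle$; it then gets $\pi^{-1}(K_1)\in(Q_{8,A})$ purely from the lattice, $(Q_{8,A})$ being the only order-eight class above $(C_{4,A})$. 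You instead identify $\mychi_{U_1}$ with $\operatorname{sgn}\circ\pi$ (correctly justified via the uniqueness of the index-two subgroup $[G,G]=\SL(2,5)$), compute $\dim U_1^{\pi^{-1}(L)}=\dim U_1^{\pi^{-1}(K_1)}=0$ directly from the fibres of $\pi$, and contrast this with $\dim U_1^{H}\geq\dim U_1^{\SL(2,3)}=1$ for any $H$ in $(C_{4,B})$ or $(Q_{8,B})$, read off the subgroup poset. Both arguments ultimately exploit the same dichotomy --- elements of class $(4A)$ map to odd permutations and those of $(4B)$ to even ones --- but yours avoids the orientation discussion, treats the $C_4$ and $Q_8$ cases uniformly, and your closing shortcut (a copy of $A_4$ in $S_5$ lies in $A_5$, so nothing under a conjugate of $\SL(2,3)$ can have image containing a transposition) is arguably the cleanest version of all. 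The generation step also differs mildly but harmlessly: the paper exhibits an element of order $10$ and appeals to the lattice, while you show $\langle K_1,K_2\rangle=S_5$ and invoke the correspondence theorem.
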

\begin{proof}
	Let us show first that $\pi^{-1}(L)\in(C_{4,A})$. We just have to exclude the case $\pi^{-1}(L)\in(C_{4,B})$. To this aim, we study the behaviour of subgroups from the class $(C_{4,B})$. Take $M=\langle g\rangle\in(C_{4,B})$ and let $V=U_{5,1}$. By Table \ref{table:characters}, $\dim V^M=3$. Therefore we can arrange the basis of $V$ to be $\mathcal{B}=\{v_1,\ldots,v_5\}$ such that $v_i$ remains fixed under the action of $g$ for $i=1,2,3$. Thus, the action of $g$ on $V$ is the order $4$ orthogonal transformation a $2$-dimensional real vector space. Hence, $g$ preserves the orientation of $V$ given by the basis $\mathcal{B}$. Note that $\pi(g)$ is the generator of $\pi(M)\cong C_2$. Therefore, $\pi(g)$ preserves the orientation of $V$ as well since $V$ can be viewed as an $\mathbb{R}S_5$-module as $\SL(2,5).C_2/\Ker(V)\cong S_5$. We show that $\pi(g)\in A_5$ which would mean, by Figure \ref{poset:characters} and Figure \ref{poset:S5}, that $\pi^{-1}(L)\notin(C_{4,B})$. We see by Figure \ref{poset:S5} that there are two conjugacy classes of elements of order $2$ in $S_5$. One of these conjugacy classes consists of a single element contained in $A_5$, while the other class has empty intersection with $A_5$. Take $\sigma$ which is the representative of the second class. Thus, $\sigma=\pi(g_{\sigma})$ for some $g_{\sigma}\in (4A)$, and $\mychi_V(\sigma)=-1$ by Table \ref{table:characters}. This means that the eigenvalues of the matrix determined by $\sigma$ are $1,1,-1,-1,-1$, and therefore $\sigma$ reverses the orientation. As $\pi(g)$ preserves the orientation, it follows then that $\pi(g)\in A_5$.  
	
	Since the conjugacy class $(Q_{16})$ from Figure \ref{poset:characters} is the unique conjugacy class of subgroups of $\SL(2,5).C_2$ containing subgroups of order $16$, and $|\pi^{-1}(K_2)|=16$ (as $|K_2|=8$ by Lemma \ref{lemma:appropriateGroups}), it follows that $\pi^{-1}(K_2)\in(Q_{16})$. Moreover, since $(Q_{8,A})$ is the only class with a representative of order $8$ and containing $C_{4,A}$, it follows that $\pi^{-1}(K_1)\in(Q_{8,A})$.
	
	In order to prove the second part, take any $g_1\in\pi^{-1}(\{(\begin{matrix}
	4&5
	\end{matrix})\})$ and $g_2\in\pi^{-1}(\{(\begin{matrix}
	1&2&3 & 4
	\end{matrix})\})$. Then $$\pi(g_2g_1)=\pi(g_2)\pi(g_1)=(\begin{matrix}
	1&2&3 & 4
	\end{matrix})(\begin{matrix}
	4 & 5
	\end{matrix})=(\begin{matrix}
	1&2&3&4 & 5
	\end{matrix}).$$
	Thus, the order of the element $g_2g_1$ is $10$, and we conclude by Figure \ref{poset:characters} that $\langle\pi^{-1}(K_1),\pi^{-1}(K_2)\rangle=\SL(2,5).C_2$. Finally, by Lemma \ref{lemma:appropriateGroups}, we have $\pi^{-1}(L)=\pi^{-1}(K_1\cap K_2)=\pi^{-1}(K_1)\cap \pi^{-1}(K_2)$.
\end{proof}
\section{The main exclusion results}\label{section:mainTech}
The following table contains useful information concerning fixed point dimensions for the four subgroups of $\SL(2,5).C_2$ considered in the previous section. This is a summary of Proposition \ref{proposition:mainSubgroupsDims}. In the table below, $i=2,3$ and $j=1,2$.
\begin{table}[H]
$$
\begin{tabular}{|c||c|c|c|c|c|c|c|c|c|c|c|c|c|c|c|c|c|c|c|c|c|c|}
\hline
     class&$(C_{4,A})$&$(Q_{8,A})$&$(Q_{16})$&$([24,4])$\\
     \hline
       \hline
     $U_1$&$0$&$0$&$0$&$0$\\
     \hline
     $U_{4,1}$&$1$&$0$&$0$&$0$\\
     \hline
     $U_{4,2}$&$3$&$2$&$1$&$1$\\
      \hline
     $U_{5,1}$&$2$&$1$&$1$&$0$\\
     \hline
     $U_{5,2}$&$3$&$2$&$1$&$1$\\
     \hline
     $U_6$&$3$&$1$&$0$&$0$\\
     \hline
     $W_{8,1}$&$0$&$0$&$0$&$0$\\
     \hline
     $W_{8,i}$&$0$&$0$&$0$&$0$\\
     \hline
     $W_{12,j}$&$0$&$0$&$0$&$0$\\
     \hline
\end{tabular}
$$
\caption{\label{table:dims}Fixed point dimension table for generating subgroups of $\SL(2,5).C_2$.}
\end{table}

In the following four lemmas, $G$ can be taken to be any finite group acting on a $\mathbb{Z}$-homology sphere $\Sigma$.
\begin{lemma}\emph{\cite[cf. Proposition 2.4]{Morimoto2020}}\label{lemma:MorimotoTamura}
     If $\Sigma^G$ is nonempty, then for any non-Oliver subgroup $H\leq G$ we have $\mychi(\Sigma^H)\neq 1$.
 \end{lemma}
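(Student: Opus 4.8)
The plan is to exploit the hypothesis $\Sigma^G\neq\emptyset$: deleting a small $G$-invariant disk around a global fixed point turns $\Sigma$ into a finite $\mathbb{Z}$-acyclic $G$-complex, and Oliver's congruence on the Euler characteristics of fixed point sets then forces the conclusion. First I would fix a point $x\in\Sigma^G$ (so that $x\in\Sigma^H$ for every subgroup $H\leq G$), equip $\Sigma$ with a $G$-invariant Riemannian metric, and use the exponential map at $x$ to obtain a closed disk neighbourhood $D_x$ of $x$ that is $G$-equivariantly diffeomorphic to the unit disk of the orthogonal $G$-representation $T_x\Sigma$; in particular $\partial D_x\cong S^{n-1}$, where $n=\dim\Sigma$. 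Let $B_x=D_x\setminus\partial D_x$ be the corresponding open disk and put $Y=\Sigma\setminus B_x$. Then $Y$ is a compact smooth $G$-manifold with boundary $\partial D_x$, hence a finite $G$-CW complex, and a standard Mayer--Vietoris argument for $\Sigma=D_x\cup Y$ with $D_x\cap Y=\partial D_x$ shows $Y$ to be $\mathbb{Z}$-acyclic, using that $\Sigma$ is a $\mathbb{Z}$-homology sphere (equivalently, $Y\simeq\Sigma\setminus\{x\}$, which is $\mathbb{Z}$-acyclic for any homology sphere $\Sigma$).

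Next I would invoke Oliver's theorem \cite{Oliver1975}: to each finite group $H$ it attaches an integer $n_H\geq 0$ such that $\mychi(X^H)\equiv 1\Mod{n_H}$ (interpreted as an equality when $n_H=0$) for every finite $\mathbb{Z}$-acyclic $H$-complex $X$, and $n_H=1$ exactly when $H$ is an Oliver group. Restricting the $G$-action on $Y$ to $H$ keeps $Y$ a finite $\mathbb{Z}$-acyclic $H$-complex, so $\mychi(Y^H)\equiv 1\Mod{n_H}$, where $n_H\neq 1$ because $H$ is non-Oliver.

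Finally I would compare $\mychi(Y^H)$ with $\mychi(\Sigma^H)$. Let $d=\dim(T_x\Sigma)^H$ be the dimension of the component of the closed manifold $\Sigma^H$ through $x$. Then $D_x^H$ is a closed $d$-disk, $(\partial D_x)^H\cong S^{d-1}$, $Y^H=\Sigma^H\setminus B_x^H$, and inclusion--exclusion for $\Sigma^H=D_x^H\cup Y^H$ gives $\mychi(Y^H)=\mychi(\Sigma^H)-1+\mychi(S^{d-1})$, so that $\mychi(Y^H)-\mychi(\Sigma^H)=\mychi(S^{d-1})-1\in\{-1,1\}$. Suppose, for contradiction, that $\mychi(\Sigma^H)=1$; then $\mychi(Y^H)\in\{0,2\}$, i.e.\ $\mychi(Y^H)-1\in\{-1,1\}$. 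If $n_H=0$ this contradicts $\mychi(Y^H)=1$; if $n_H\geq 2$, then $n_H\mid\mychi(Y^H)-1$ forces $n_H=1$, again a contradiction. Hence $\mychi(\Sigma^H)\neq 1$.

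I expect the main obstacle to be conceptual rather than computational: the key realisation is that $\Sigma^G\neq\emptyset$ is exactly the ingredient needed to excise a $G$-invariant disk and reduce a question about homology spheres to Oliver's congruence for $\mathbb{Z}$-acyclic complexes. Once that reduction is set up, the homological check of $\mathbb{Z}$-acyclicity, the Euler characteristic bookkeeping around the removed point, and the final divisibility step are all routine; the only mild care needed is to take $d$ to be the local dimension of $\Sigma^H$ at $x$, since $\Sigma^H$ may have components of differing dimensions.
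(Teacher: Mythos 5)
The paper does not prove this lemma itself---it imports it from Morimoto and Tamura \cite[Proposition 2.4]{Morimoto2020}---and your argument is a correct reconstruction of the standard proof behind that citation: excise a $G$-invariant disk about a point of $\Sigma^G$ to obtain a finite $\mathbb{Z}$-acyclic $G$-complex $Y$, apply Oliver's congruence to the non-Oliver subgroup $H$, and account for the Euler characteristics of the excised $d$-disk and its boundary $(d-1)$-sphere, where $d=\dim(T_x\Sigma)^H$. The one point you should make explicit is that Oliver's invariant $n_H$ is defined only for groups \emph{not} of prime power order; for $H$ of prime power order $p^k$ you must instead invoke Smith theory directly (either on $Y$, where $\mychi(Y^H)=1$ exactly, which is what your convention ``$n_H=0$'' encodes, or already on $\Sigma$ itself, where $\Sigma^H$ is a $\mathbb{Z}_p$-homology sphere and hence has Euler characteristic $0$ or $2$ without any appeal to $\Sigma^G\neq\emptyset$). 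With that case split spelled out, the bookkeeping $\mychi(Y^H)=\mychi(\Sigma^H)-1+\mychi(S^{d-1})$ and the final divisibility argument are exactly right.
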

\begin{lemma}\label{lemma:discreteStrategy}
 Suppose that $H_1$ and $H_2$ are non-Oliver subgroups of $G$ which generate $G$ and suppose $P$ is a prime power order subgroup of $H_1\cap H_2$. If there exists $x\in\Sigma^G$
	with $\dim T_x(\Sigma^P) = 0$, then $\Sigma^G$ is a two point set.
\end{lemma}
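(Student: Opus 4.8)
The plan is to determine $\Sigma^P$ exactly and then push the information up through $H_1$ and $H_2$ to $G$. Since $x\in\Sigma^G$ and $P\le H_1\cap H_2\le G$, we have nonempty nested fixed sets $\Sigma^G\subseteq\Sigma^{H_i}\subseteq\Sigma^P$ for $i=1,2$. The first step is to show that $\Sigma^P$ is a two-point set. The group $P$ has prime power order, say a power of the prime $p$, and $\Sigma$, being a $\mathbb{Z}$-homology sphere, is a $\mathbb{Z}_p$-homology sphere; so Smith theory applies and $\Sigma^P$ is a closed smooth submanifold of $\Sigma$ which is a $\mathbb{Z}_p$-homology sphere. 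The hypothesis $\dim T_x(\Sigma^P)=0$ means that the connected component of $\Sigma^P$ containing $x$ is the single point $\{x\}$, so $\Sigma^P$ is $0$-dimensional at $x$; using the form of Smith theory which also controls the dimension of the fixed set, it follows that $\Sigma^P$ is a $0$-dimensional $\mathbb{Z}_p$-homology sphere. As such it is a closed $0$-manifold with $H_0(\Sigma^P;\mathbb{Z}_p)\cong\mathbb{Z}_p^{2}$, hence consists of exactly two points; write $\Sigma^P=\{x,y\}$.

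For the second step, note that $\Sigma^{H_i}\subseteq\Sigma^P=\{x,y\}$ and $x\in\Sigma^{H_i}$, so $\Sigma^{H_i}$ is either $\{x\}$ or $\{x,y\}$. Since $\Sigma^G\neq\emptyset$ and $H_1,H_2$ are non-Oliver subgroups of $G$, Lemma \ref{lemma:MorimotoTamura} yields $\mychi(\Sigma^{H_i})\neq 1$ for $i=1,2$; as $\mychi(\{x\})=1$ whereas $\mychi(\{x,y\})=2$, this forces $\Sigma^{H_1}=\Sigma^{H_2}=\{x,y\}$. Finally, a point of $\Sigma$ is fixed by $G$ precisely when it is fixed by both $H_1$ and $H_2$, because $H_1$ and $H_2$ generate $G$; therefore $\Sigma^G=\Sigma^{H_1}\cap\Sigma^{H_2}=\{x,y\}$, a two-point set, as claimed.

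Essentially all of the content sits in the first step: producing the second fixed point $y$ from the single known one $x$. The delicate point there is the appeal to Smith theory — one needs not merely that the fixed set of a $p$-group acting on a mod-$p$ homology sphere is again a mod-$p$ homology sphere, but also that its dimension is controlled, which is what promotes the purely local statement $\dim T_x(\Sigma^P)=0$ to the global conclusion that $\Sigma^P$ is a two-point set. Once $\Sigma^P=\{x,y\}$ is in hand, the remainder is formal and uses only the already-established Lemma \ref{lemma:MorimotoTamura} together with the hypothesis $\langle H_1,H_2\rangle=G$.
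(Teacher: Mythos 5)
Your proposal is correct and follows essentially the same route as the paper: Smith theory to identify $\Sigma^P$ as a two-point $\mathbb{Z}_p$-homology sphere, Lemma \ref{lemma:MorimotoTamura} to rule out $|\Sigma^{H_i}|=1$, and $\langle H_1,H_2\rangle=G$ to conclude $\Sigma^G=\Sigma^P$. Your version is in fact slightly more careful than the paper's at the one delicate point, namely explaining why the local hypothesis $\dim T_x(\Sigma^P)=0$ forces the whole of $\Sigma^P$ to be finite.
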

\begin{proof}
    By the Smith theory, it follows that $\Sigma^P$ is a $\mathbb{Z}_p$-homology sphere. As it is a finite set, we have $|\Sigma^P|=2$. Since $\Sigma^{H_1},\Sigma^{H_2}\subseteq\Sigma^P$, the Euler characteristics of $\Sigma^{H_1}$ and $\Sigma^{H_2}$ are equal to their cardinalities. On the other hand, by Lemma \ref{lemma:MorimotoTamura}, we conclude that $|\Sigma^{H_i}|\neq 1$ for $i=1,2$. Thus $|\Sigma^{H_i}|=2$ for $i=1,2$, and, in conclusion, $\Sigma^{H_1}=\Sigma^{H_2}=\Sigma^P$. As $\langle H_1,H_2\rangle=G$, we have $\Sigma^G=\Sigma^{H_1}\cap\Sigma^{H_2}=\Sigma^P$, and $\Sigma^G$ is therefore a two point set.
\end{proof}
The following lemma is a modified version of the previous one.
\begin{lemma}\label{lemma:discreteStrategy2}
	Suppose $\Sigma$ is a standard sphere and $H_1$, $H_2$ are non-Oliver subgroups generating $G$. Let $x_0\in\Sigma^G$ and put $V=T_{x_0}\Sigma$. Suppose $H\leq H_1\cap H_2$ is such that
	$$
	\dim V^K+\dim V^L=\dim V^H
	$$
	and $\Sigma^H\cong S^k$ for some $k\in\{0,1,2\}$. Then $\Sigma^G\neq\{x_0\}$.
\end{lemma}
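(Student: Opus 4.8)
The plan is to argue by contradiction, assuming $\Sigma^G=\{x_0\}$ and reading the hypothesis as $\dim V^{H_1}+\dim V^{H_2}=\dim V^H$ (with $H_1,H_2$ in place of $K,L$). Since $\langle H_1,H_2\rangle=G$ we have $\Sigma^{H_1}\cap\Sigma^{H_2}=\Sigma^{\langle H_1,H_2\rangle}=\Sigma^G=\{x_0\}$, and since $H\leq H_1\cap H_2$ both $\Sigma^{H_1}$ and $\Sigma^{H_2}$ are closed submanifolds of $\Sigma^H\cong S^k$. Linearising the action at $x_0$ (the slice theorem identifies a $G$-invariant neighbourhood of $x_0$ in $\Sigma$ with a neighbourhood of $0$ in $V$) gives $T_{x_0}\Sigma^{H_i}=V^{H_i}$, $T_{x_0}\Sigma^H=V^H$, so that $\dim V^H=k$, and $T_{x_0}\Sigma^G=V^G=0$, the last equality because $\Sigma^G$ is the single point $x_0$. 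Setting $d_i=\dim V^{H_i}$, the hypothesis becomes $d_1+d_2=k$, and the connected component $N_i$ of $\Sigma^{H_i}$ through $x_0$ is a closed connected submanifold of $S^k$ with $\dim N_i=d_i$ and $T_{x_0}N_i=V^{H_i}$. Note also that $x_0\in\Sigma^G\neq\emptyset$, so Lemma~\ref{lemma:MorimotoTamura} applies to the non-Oliver subgroups $H_1,H_2$, giving $\mychi(\Sigma^{H_i})\neq 1$.

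Next I would dispose of the easy configurations. If $k=0$, then $\Sigma^{H_1},\Sigma^{H_2}$ are non-empty (they contain $x_0$) subsets of the two-point set $\Sigma^H=S^0$, and $\mychi(\Sigma^{H_i})=|\Sigma^{H_i}|\neq1$ forces $\Sigma^{H_i}=S^0$, whence $\Sigma^G=\Sigma^{H_1}\cap\Sigma^{H_2}=S^0\neq\{x_0\}$, a contradiction. If instead $k\in\{1,2\}$ and $\min(d_1,d_2)=0$, say $d_1=0$ and $d_2=k$, then $N_2$ is a $k$-dimensional connected submanifold of the $k$-manifold $S^k$, hence open, and it is also closed, being a connected component of the closed set $\Sigma^{H_2}$; as $S^k$ is connected this forces $N_2=S^k$, so $\Sigma^{H_2}=\Sigma^H=S^k$ and therefore $\{x_0\}=\Sigma^{H_1}\cap\Sigma^{H_2}=\Sigma^{H_1}$, contradicting $\mychi(\Sigma^{H_1})\neq1$. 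Since for $k=1$ the possibility $d_1=d_2\geq1$ is excluded by $d_1+d_2=1$, the only case left after this is $k=2$ with $d_1=d_2=1$.

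In that remaining case, $N_1$ and $N_2$ are smoothly embedded circles in $S^2$ with $N_1\cap N_2\subseteq\Sigma^{H_1}\cap\Sigma^{H_2}=\{x_0\}$, so they meet exactly at the point $x_0$, and this intersection is transverse inside $S^2$: indeed $T_{x_0}N_1\cap T_{x_0}N_2=V^{H_1}\cap V^{H_2}=V^{\langle H_1,H_2\rangle}=V^G=0$, while $\dim T_{x_0}N_1+\dim T_{x_0}N_2=2=\dim T_{x_0}S^2$. By the smooth Jordan--Schoenflies theorem $N_1$ separates $S^2$ into two open discs $D^{+}$ and $D^{-}$; the set $N_2\setminus\{x_0\}$ is a connected arc disjoint from $N_1$, hence contained entirely in one of $D^{+}$, $D^{-}$, whereas transversality of the crossing at $x_0$ puts points of $N_2$ arbitrarily close to $x_0$ in both $D^{+}$ and $D^{-}$. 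This contradiction completes the argument, so $\Sigma^G\neq\{x_0\}$.

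I expect the only step that is more than bookkeeping to be this last one, where one must combine the transversality of $N_1,N_2$ at $x_0$ (which rests on the identification $T_{x_0}N_i=V^{H_i}$ and on $V^G=0$) with the Jordan curve theorem to see that the arc $N_2\setminus\{x_0\}$ cannot lie on a single side of $N_1$. The rest of the proof only uses the slice theorem, the elementary classification of closed submanifolds of $S^k$ for $k\leq2$, and Lemma~\ref{lemma:MorimotoTamura}.
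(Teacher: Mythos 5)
Your proof is correct and follows essentially the same route as the paper's: the same case split on $(\dim V^{H_1},\dim V^{H_2})$ summing to $k$, with the degenerate cases disposed of via $\chi(\Sigma^{H_i})\neq 1$ (Lemma \ref{lemma:MorimotoTamura}) and via identifying a top-dimensional component of $\Sigma^{H_i}$ with all of $\Sigma^H$. The only divergence is the case $k=2$, $d_1=d_2=1$, where the paper notes that the mod-2 intersection form on $S^k$ in complementary positive degrees is trivial, so a single transverse intersection point at $x_0$ is impossible, whereas you reach the same contradiction by an elementary Jordan--Schoenflies argument; both are valid.
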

\begin{proof}
	Suppose the assumptions of the Lemma hold. Consider the case $\dim V^{H_1}=\dim V^{H_2}=1$ as it is a bit different. Let $C(H_1)$ and $C(H_2)$ be the connected components containing $x_0$ of $\Sigma^{H_1}$ and $\Sigma^{H_2}$ respectively. Suppose for the converse that $\Sigma^G=\{x_0\}$. Then, as $\langle H_1,H_2\rangle=G$, we have $\Sigma^G=C(H_1)\cap C(H_2)$. On the other hand the intersection number of $C(H_1)$ and $C(H_2)$ in $\Sigma^H\cong S^k$ is zero. A contradiction.
	
Suppose now that at least one of the dimensions, $\dim V^{H_1}$ or $\dim V^{H_2}$, is not equal to $1$. As $\dim V^H=k$, where $k\in\{0,1,2\}$, this means that precisely one dimension from $\dim V^{H_1}$ and $\dim V^{H_2}$ is zero and the second one is $\dim V^H$. The case $k=0$ can be excluded by the previous lemma. Without loss of generality, suppose that $\dim V^{H_1}=\dim V^H$. This means that $\Sigma^{H_1}$ contains a connected component which is a closed submanifold of $\Sigma^H\cong S^k$ of dimension $k\in\{1,2\}$. Therefore this component must be entire $\Sigma^H$, which yields $\Sigma^{H_1}=\Sigma^H$. Thus, as $\Sigma^{H_2}\subseteq\Sigma^H$,
	$$
	\Sigma^G=\Sigma^{H_1}\cap\Sigma^{H_2}=\Sigma^H\cap\Sigma^{H_2}=\Sigma^{H_2}.
	$$
	Therefore, if $\Sigma^G=\{x_0\}$, then $\Sigma^{H_2}=\{x_0\}$. The latter, however, cannot hold since $H_2$ is a non-Oliver group and as such it cannot act on spheres with exactly one fixed point.
\end{proof}
\begin{lemma}\emph{\cite[Corollary 2.8]{Morimoto2020}}\label{lemma:index2MorimotoTamura}
Let $G_2$ be the intersection of all subgroups $H\leq G$ with $[G:H]\leq 2$ and suppose $x\in\Sigma^G$. If $\dim T_x(\Sigma)^{G_2}>0$, then $\Sigma^G\neq\{x\}$. 
\end{lemma}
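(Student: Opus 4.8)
The plan is to argue by contradiction: assume $\Sigma^G=\{x\}$ and derive a contradiction. Writing $V=T_x(\Sigma)$, the idea is to produce an index-two normal subgroup $H$ of $G$ with $\dim V^H\ge 1$; then the component of $\Sigma^H$ through $x$ will be a positive-dimensional closed manifold on which $C_2\cong G/H$ acts with $x$ as its only fixed point, and this is impossible. To begin, recall that $\Sigma^G$ is a closed submanifold of $\Sigma$ with $T_x(\Sigma^G)=V^G$, so if $\dim V^G>0$ then $\Sigma^G$ is positive-dimensional near $x$ and we are already done. Hence we may assume $\dim V^G=0$, so that $\dim V^{G_2}>0=\dim V^G$.

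Since $\bar G:=G/G_2$ is a quotient of a product of copies of $C_2$, it is an elementary abelian $2$-group, so every real irreducible $\mathbb{R}\bar G$-module is one-dimensional. As $G_2\trianglelefteq G$, the subspace $V^{G_2}$ is an $\mathbb{R}\bar G$-module with $(V^{G_2})^{\bar G}=V^G=0$, hence it is a direct sum of non-trivial one-dimensional representations $\mathbb{R}_{\epsilon_i}$, each $\epsilon_i\colon G\to\{\pm 1\}$ being a surjective homomorphism that factors through $\bar G$. Put $\epsilon:=\epsilon_1$ and $H:=\ker\epsilon$; then $H\trianglelefteq G$ has index two, $G_2\le H$, and $H$ fixes the line $\mathbb{R}_\epsilon\subseteq V$ pointwise, so $\dim V^H\ge 1$. (In the situation of Section~\ref{section:mainTech}, where $G=\SL(2,5).C_2$, one has $G_2=\SL(2,5)$ and $H=G_2$ is forced.)

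Let $M_0$ be the component of the closed submanifold $\Sigma^H$ containing $x$, and set $m=\dim M_0=\dim V^H\ge 1$. Then $M_0$ is a closed connected manifold, it is $G$-invariant (so $C_2\cong G/H$ acts on it), and this action is non-trivial since otherwise $M_0\subseteq\Sigma^G=\{x\}$. The tangent representation of this $C_2$-action at $x$ is $V^H$, and $(V^H)^{C_2}=V^G=0$, so $C_2$ acts on $M_0$ near $x$ as $-\mathrm{id}$; moreover $M_0^{C_2}\subseteq(\Sigma^H)^{C_2}=\Sigma^G$. If $\Sigma^G=\{x\}$, then $M_0^{C_2}=\{x\}$, so the involution $C_2$ has exactly one fixed point on the positive-dimensional closed connected manifold $M_0$. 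This cannot happen: excising an invariant disk $D$ about the fixed point (on which $C_2$ acts antipodally) leaves a compact manifold $W$ carrying a free $C_2$-action, with $\partial W\cong S^{m-1}$ equipped with the antipodal action, so that $W\to W/C_2$ is a free double cover of a compact manifold $X$ with $\partial X\cong\mathbb{R}P^{m-1}$; the classifying map $X\to BC_2$ restricts on $\partial X$ to the standard inclusion $\mathbb{R}P^{m-1}\hookrightarrow BC_2=\mathbb{R}P^{\infty}$, which would then be null-bordant in the unoriented bordism group $\mathfrak{N}_{m-1}(BC_2)$, contradicting the classical fact (Conner--Floyd) that $\mathfrak{N}_{*}(BC_2)$ is a free $\mathfrak{N}_{*}$-module on the classes $[\mathbb{R}P^{k}\hookrightarrow BC_2]$, $k\ge 0$. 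Therefore $\Sigma^G\neq\{x\}$.

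The genuine obstacle is this last ingredient --- that a $C_2$-action on a positive-dimensional closed connected manifold cannot have exactly one fixed point --- which is a statement about general closed manifolds rather than about homology spheres; the bordism argument above is one way to secure it (equivalently, it can be quoted from the Conner--Floyd theory of differentiable periodic maps). Everything else is routine: the reduction to $\dim V^G=0$, the splitting of $V^{G_2}$ over the elementary abelian quotient $\bar G$, and the identification of the tangent action of $C_2$ on $M_0$ at $x$ as $-\mathrm{id}$. As an alternative route one could try to exhibit a non-Oliver subgroup $H'\le G$ with $\Sigma^{H'}=\Sigma^G$ and invoke Lemma~\ref{lemma:MorimotoTamura} to get $\chi(\Sigma^{H'})\ne 1$, but extracting such an $H'$ from the hypothesis $\dim V^{G_2}>0$ seems to return to the same point about involutions.
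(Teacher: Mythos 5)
Your argument is correct, and it is worth noting that the paper itself offers no proof of this lemma: it is quoted verbatim from Morimoto--Tamura \cite[Corollary 2.8]{Morimoto2020}, so there is nothing internal to compare against line by line. Your route is a clean, self-contained reconstruction. The reduction steps are all sound: if $\dim V^G>0$ you are done at once; otherwise $G/G_2$ embeds in a product of copies of $C_2$ (every index-$2$ subgroup being normal), so it is elementary abelian and $V^{G_2}$ splits into nontrivial real characters, producing an index-$2$ subgroup $H=\ker\epsilon_1\supseteq G_2$ with $\dim V^H\geq 1$; the component $M_0\ni x$ of $\Sigma^H$ is then a closed connected $G$-invariant manifold of dimension $\dim V^H$ on which $G/H\cong C_2$ acts with $(V^H)^{C_2}=V^G=0$, i.e.\ as $-\mathrm{id}$ at $x$, and with $M_0^{C_2}=(\Sigma^H)^{G/H}\cap M_0\subseteq\Sigma^G$. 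The entire weight then rests on the classical fact that a smooth involution on a positive-dimensional closed connected manifold cannot have exactly one fixed point, and your bordism justification (excise an invariant linear disk, pass to the free quotient, and observe that $[\mathbb{R}P^{m-1}\hookrightarrow BC_2]$ would become null-bordant, contradicting Conner--Floyd's computation of $\mathfrak{N}_*(BC_2)$ as a free $\mathfrak{N}_*$-module on the classes of the $\mathbb{R}P^{k}$) is a standard and valid way to secure it; the only implicit ingredient is Bochner linearization to get the invariant disk with antipodal boundary action. You are right that Lemma \ref{lemma:MorimotoTamura} is not directly usable here, since $M_0$ need not be a homology sphere. Two cosmetic remarks: $G/G_2$ is a \emph{subgroup} (not a quotient) of the product of the $C_2$'s, though the conclusion is the same; and your proof never uses that $\Sigma$ is a $\mathbb{Z}$-homology sphere, so it actually establishes the statement for an arbitrary closed smooth $G$-manifold, which is slightly more general than what the lemma asserts.
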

The theorem below is similar to Theorem $5.1$ of \cite{Tamura2020}. 
\begin{theorem}\label{theorem:main}
Let $\Sigma$ be a $\mathbb{Z}$-homology sphere with $\SL(2,5).C_2$-action and $|\Sigma^{SL(2,5).C_2}|\equiv 1\Mod{2}$. Then the following statements hold.
\begin{enumerate}[(1)]
	\item If for each $x\in\Sigma^{\SL(2,5).C_2}$, $T_x(\Sigma)$ contains at least one of the modules $U_{4,2}$ or $U_{5,2}$, then the set of all points $x\in\Sigma^{\SL(2,5).C_2}$ such that $T_x(\Sigma)$ contains $U_6$ consists of odd number of points.
	\item If for at least one point $x\in\Sigma^{\SL(2,5).C_2}$, $T_x(\Sigma)$ contains neither of $U_{4,2}$ and $U_{5,2}$, then $T_x(\Sigma)$ contains $U_{5,1}$ or $U_6$.
\end{enumerate}
\end{theorem}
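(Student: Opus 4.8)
The plan is to exploit the rigidity of fixed-point dimensions under Smith theory together with Lemma~\ref{lemma:discreteStrategy}, applied to two different pairs of non-Oliver subgroups that generate $G=\SL(2,5).C_2$. Throughout, let $G$ act on the $\mathbb Z$-homology sphere $\Sigma$ with $|\Sigma^{G}|$ odd; then $\Sigma^{G}$ is finite, nonempty and $|\Sigma^{G}|\ne 2$, so for $x\in\Sigma^{G}$ the tangent module $T_x\Sigma$ is a real $G$-module with $(T_x\Sigma)^{G}=T_x(\Sigma^{G})=0$, and I write $T_x\Sigma=\bigoplus_V m_V(x)\,V$ over the nontrivial irreducible $\mathbb R G$-modules of Table~\ref{table:characters}. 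First I would record that for any prime-power subgroup $P\le G$ Smith theory makes $\Sigma^{P}$ a $\mathbb Z_p$-homology sphere, hence connected when positive-dimensional and a two-point set otherwise; in either case $\dim(T_x\Sigma)^{P}=\dim\Sigma^{P}$ is independent of $x\in\Sigma^{G}$, and its value is read off Table~\ref{table:dims} for $P\in\{C_{4,A},Q_{8,A},Q_{16}\}$. Next, by Lemma~\ref{lemma:apropriateGroups2} the non-Oliver subgroups $\pi^{-1}(K_1)\in(Q_{8,A})$ and $\pi^{-1}(K_2)\in(Q_{16})$ generate $G$ and intersect in $\pi^{-1}(L)\in(C_{4,A})$, so Lemma~\ref{lemma:discreteStrategy} rules out $\dim(T_x\Sigma)^{C_{4,A}}=0$ for every $x\in\Sigma^{G}$; likewise $Q_{16}$ together with a $[24,4]$ sharing a common $Q_{8,A}$ generate $G$ (because $[24,4]$ is maximal and cannot contain $Q_{16}$) and are non-Oliver ($Q_{16}$ is a $2$-group and $[24,4]=\Dic_6$ has a normal cyclic subgroup of index $2$), so Lemma~\ref{lemma:discreteStrategy} with $P=Q_{8,A}$ rules out $\dim(T_x\Sigma)^{Q_{8,A}}=0$ for every $x\in\Sigma^{G}$.

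For part~(2), fix $x_0\in\Sigma^{G}$ with $m_{U_{4,2}}(x_0)=m_{U_{5,2}}(x_0)=0$. By the above and Table~\ref{table:dims}, $0<\dim(T_{x_0}\Sigma)^{C_{4,A}}=m_{U_{4,1}}(x_0)+2m_{U_{5,1}}(x_0)+3m_{U_6}(x_0)$, so at least one of these multiplicities is positive; if $m_{U_{5,1}}(x_0)>0$ or $m_{U_6}(x_0)>0$ we are done. Otherwise $T_{x_0}\Sigma$ is assembled from copies of $U_1$, $U_{4,1}$ and the faithful modules $W_{8,k},W_{12,k}$, each of which has vanishing $Q_{8,A}$-fixed subspace by Table~\ref{table:dims}; hence $\dim(T_{x_0}\Sigma)^{Q_{8,A}}=0$, contradicting the previous paragraph. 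So $T_{x_0}\Sigma$ contains $U_{5,1}$ or $U_6$.

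For part~(1), the hypothesis reads $m_{U_{4,2}}(x)+m_{U_{5,2}}(x)\ge 1$ for all $x\in\Sigma^{G}$. Reading Table~\ref{table:dims} at each such $x$, I set $d_R:=\dim\Sigma^{Q_{16}}=m_{U_{4,2}}(x)+m_{U_{5,1}}(x)+m_{U_{5,2}}(x)$ and $d_Q:=\dim\Sigma^{Q_{8,A}}=2m_{U_{4,2}}(x)+m_{U_{5,1}}(x)+2m_{U_{5,2}}(x)+m_{U_6}(x)$, both constant on $\Sigma^{G}$; then $s:=d_Q-d_R=m_{U_{4,2}}(x)+m_{U_{5,2}}(x)+m_{U_6}(x)$ is a constant with $s\ge1$ and $d_R\ge1$, and subtracting the three relevant entries yields the key identity
$$
m_{U_6}(x)=\dim(T_x\Sigma)^{Q_{8,A}}-\dim(T_x\Sigma)^{Q_{16}}-\dim(T_x\Sigma)^{[24,4]}=s-\dim(T_x\Sigma)^{[24,4]},
$$
so $m_{U_6}(x)=0$ exactly when the component of $\Sigma^{[24,4]}$ through $x$ attains the maximal dimension $s$. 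Let $B$ be the union of the $s$-dimensional components of $\Sigma^{[24,4]}$, a closed $s$-submanifold of $\Sigma^{Q_{8,A}}$ (note $Q_{8,A}\le[24,4]$), and recall that $\Sigma^{Q_{8,A}}$ is a connected $\mathbb Z_2$-homology $d_Q$-sphere. Since $0<d_R<d_Q$, the fundamental class $[\Sigma^{Q_{16}}]$ vanishes in $H_{d_R}(\Sigma^{Q_{8,A}};\mathbb Z_2)$, so the mod-$2$ intersection number $[\Sigma^{Q_{16}}]\cdot[B]\in H_0(\Sigma^{Q_{8,A}};\mathbb Z_2)$ is zero; on the other hand $\Sigma^{Q_{16}}$ and $B$ meet transversally, since any common point $x$ lies in $\Sigma^{Q_{16}}\cap\Sigma^{[24,4]}=\Sigma^{\langle Q_{16},[24,4]\rangle}=\Sigma^{G}$ and satisfies $\dim(T_x\Sigma)^{[24,4]}=s$, whence $(T_x\Sigma)^{Q_{16}}\cap(T_x\Sigma)^{[24,4]}=(T_x\Sigma)^{G}=0$ while $\dim(T_x\Sigma)^{Q_{16}}+\dim(T_x\Sigma)^{[24,4]}=d_R+s=d_Q=\dim\Sigma^{Q_{8,A}}$. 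Hence $0\equiv|\Sigma^{Q_{16}}\cap B|=\#\{x\in\Sigma^{G}:m_{U_6}(x)=0\}\pmod 2$, and therefore $\#\{x\in\Sigma^{G}:U_6\subseteq T_x\Sigma\}\equiv|\Sigma^{G}|\equiv1\pmod2$.

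The lattice bookkeeping (which non-Oliver subgroups generate $G$ and how they meet) and the numerical values drawn from Table~\ref{table:dims} are routine, being supplied by Section~\ref{section:algData}. The step I expect to cost the most effort is the one in part~(1): organizing $\Sigma^{Q_{16}}$ and the possibly non-equidimensional manifold $\Sigma^{[24,4]}$ inside the homology sphere $\Sigma^{Q_{8,A}}$ so that $\mathbb Z_2$-intersection theory applies, and verifying that at the points of $\Sigma^{G}$ the intersection is genuinely transverse. What unlocks the argument is the key identity, which rewrites the multiplicity $m_{U_6}(x)$ as a difference of fixed-point dimensions and thereby turns the assertion into a homological count.
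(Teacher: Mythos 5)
Your proposal is correct and follows essentially the same route as the paper: part (2) via Lemma \ref{lemma:discreteStrategy} applied to the non-Oliver generating pair $Q_{16}$, $[24,4]$ with $P=Q_{8,A}$, and part (1) via the mod-$2$ intersection number of $\Sigma^{Q_{16}}$ with the top-dimensional components of $\Sigma^{[24,4]}$ inside the $\mathbb{Z}_2$-homology sphere $\Sigma^{Q_{8,A}}$, which vanishes because the relevant homology group is trivial. Your identity $m_{U_6}(x)=s-\dim(T_x\Sigma)^{[24,4]}$ is just a cleaner packaging of the paper's multiplicity bookkeeping.
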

\begin{proof}
Let us prove first the first statement. Assume therefore that for each $x\in\Sigma^{\SL(2,5).C_2}$, $T_x(\Sigma)$ contains at least one of the modules $U_{4,2}$ or $U_{5,2}$. Denote by $F$ the set consisting of all points from $x\in\Sigma^{\SL(2,5).C_2}$ such that the tangential representation $T_x(\Sigma)$ contains $U_6$ and put $F'=\Sigma^{\SL(2,5).C_2}\setminus F$. We show that $|F'|\equiv 0\Mod{2}$.

If $F'$ is empty, then the assertion follows. Hence, we may assume $F'$ is non-empty and take $y\in F'$. Suppose now $F$ is non-empty and take $x\in F$. Then,
$$
T_x(\Sigma)=U_6^{\oplus k}\oplus U_1^{\oplus a}\oplus U_{4,1}^{\oplus b_1}\oplus U_{4,2}^{\oplus b_2}\oplus U_{5,1}^{\oplus c_1}\oplus U_{5,2}^{\oplus c_2}\oplus W,
$$
$$
T_y(\Sigma)=U_1^{\oplus r}\oplus U_{4,1}^{\oplus s_1}\oplus U_{4,2}^{\oplus s_2}\oplus U_{5,1}^{\oplus t_1}\oplus U_{5,2}^{\oplus t_2}\oplus W',
$$
where $W$ and $W'$ contain only the irreducibles denoted with the capital "W" and some index (these are precisely faithful real irreducibles of $\SL(2,5).C_2$) and $a,b_1,b_2,c_1,c_2,r,s_1,s_2,t_1,t_2$ are non-negative integers and $k$ is a positive integer. Moreover, by our assumption, we know that $s_2$ or $t_2$ is positive (and, similarly, $b_2$ or $c_2$ is positive).

Since $\langle Q_{16},[24,4]\rangle={\SL(2,5).C_2}$, we have
$
\Sigma^{Q_{16}}\cap\Sigma^{[24,4]}=\Sigma^{\SL(2,5).C_2}.
$
Note that $0<s_2+t_2=\dim T_y(\Sigma^{[24,4]})\leq \dim T_y(\Sigma^{Q_{8,A}})$, and $\dim T_y(\Sigma^{Q_{16}})=s_2+t_1+t_2>0$. Therefore both $\Sigma^{Q_{8,A}}$ and $\Sigma^{Q_{16}}$ are connected as positive dimensional $\mathbb{Z}_2$-homology spheres (by the Smith Theory) because $Q_{8,A}$ and $Q_{16}$ are $2$-groups. Summing up, by Table \ref{table:dims}, we get
$$
\dim\Sigma^{Q_{16}}=b_2+c_1+c_2, \dim\Sigma^{Q_{16}}=s_2+t_1+t_2,
$$
$$
\dim T_x(\Sigma^{[24,4]})=b_2+c_2, \dim T_y(\Sigma^{[24,4]})=s_2+t_2,
$$
and
$$
\dim\Sigma^{Q_{8,A}}=2b_2+c_1+2c_2+k, \dim\Sigma^{Q_{8,A}}=2s_2+t_1+2t_2.
$$
Thus,
$$
\dim\Sigma^{Q_{16}}+\dim T_x(\Sigma^{[24,4]})=\dim\Sigma^{Q_{8,A}}-k,
$$
$$
\dim\Sigma^{Q_{16}}+\dim T_y(\Sigma^{[24,4]})=\dim\Sigma^{Q_{8,A}}.
$$
Denote by $\Sigma^{[24,4]}_0$ the union of all connected components $C$ of $\Sigma^{[24,4]}$ such that $C\cap\Sigma^{\SL(2,5).C_2}\neq\emptyset$ and $\dim \Sigma^{Q_{16}}+\dim C=\dim\Sigma^{Q_{8,A}}$. Then $F\cap\Sigma^{[24,4]}_0=\emptyset$ and $F'=\Sigma^{[24,4]}_0\cap\Sigma^{\SL(2,5).C_2}$. Thus, the $\Mod{2}$-intersection number of $\Sigma^{Q_{16}}$ and $\Sigma^{[24,4]}_0$ in $\Sigma^{Q_{8,A}}$ is equal to $|F'|$ in $\mathbb{Z}_2$. On the other hand, the canonical $\Mod{2}$-intersection form on $\Sigma^{Q_{8,A}}$,
$$
\varphi:H_{s_2+t_1+t_2}(\Sigma^{Q_{8,A}};\mathbb{Z}_2)\times H_{s_2+t_1}(\Sigma^{Q_{8,A}};\mathbb{Z}_2)\rightarrow\mathbb{Z}_2
$$
is trivial as $\Sigma^{Q_{8,A}}$ is a $\mathbb{Z}_2$-homology sphere and $s_2+t_2>0$. Thus, the intersection number of $\Sigma^{Q_{16}}$ and $\Sigma^{[24,4]}_0$ in $\Sigma^{Q_{8,A}}$ is zero, whence $|F'|\equiv 0\Mod{2}$ in the case $F$ is non-empty. If $F$ is empty, then $\Sigma^{[24,4]}_0=\Sigma^{[24,4]}$ and the same argument with intersection numbers holds (in fact $F$ cannot be empty as by this argument, $|F'|$ is even which means that $|\Sigma^{\SL(2,5).C_2}|$ is even as well, as $\Sigma^{\SL(2,5).C_2}=F'$ in this case). This concludes the proof of the first statement.

Let us prove the second statement. Suppose therefore that for some $z\in\Sigma^{\SL(2,5).C_2}$, $T_z(\Sigma)$ contains neither of $U_{4,2}$ and $U_{5,2}$. If $T_z(\Sigma)$ contains $U_{5,1}$ or $U_6$, there is nothing to prove. Thus, we can assume that $T_z(\Sigma)$ does not contain neither of $U_{4,2}$, $U_{5,2}$, $U_{5,1}$, and $U_6$. Hence, $T_z(\Sigma)\cong U_1^{\oplus r}\oplus U_{4,1}^{\oplus s_1}\oplus W$ for $W$ containing only direct summands from the set $\{W_{8,1},W_{8,2},W_{8,3},W_{12,1},W_{12,2}\}$. This means, however, that $\dim T_z(\Sigma^{Q_{8,A}})=0$. As $Q_{16}$ and $[24,4]$ are non-Oliver and generate $\SL(2,5).C_2$, we conclude by Lemma \ref{lemma:discreteStrategy} that $|\Sigma^{\SL(2,5).C_2}|=2$ which is a contradiction.
\end{proof}
\section{Proofs of Theorem \ref{theorem:main1} and Theorem \ref{theorem:main2}}
In the light of Lemma \ref{lemma:apropriateGroups2}, we can assume that $C_{4,A}\leq Q_{8,A}\cap Q_{16}$ and $\langle Q_{8,A},Q_{16}\rangle={\SL(2,5).C_2}$. 
\begin{theorem}\emph{[cf. Theorem \ref{theorem:main1}]}\label{theorem:main12}
	$\SL(2,5).C_2$ cannot act effectively with odd number of fixed points on $n$-dimensional $\mathbb{Z}$-homology sphere provided $n\in\{0,1,\ldots,13\}$. Moreover, $\SL(2,5).C_2$ cannot act effectively with exactly one fixed point on an $S^n$ provided $n\in\{0,1,\ldots,13\}\cup\{15,16,17\}$.
\end{theorem}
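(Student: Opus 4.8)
\section*{Proof proposal}

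The plan is to fix $G=\SL(2,5).C_2$ and to reduce both assertions, dimension by dimension, to a finite analysis of the tangential representations at the fixed points. Suppose $G$ acts effectively on a $\mathbb{Z}$-homology $n$-sphere $\Sigma$ with $|\Sigma^G|$ odd; then $\Sigma^G\neq\emptyset$ and, for $n\geq1$, $\Sigma$ is connected. For $x\in\Sigma^G$ the tangential representation $V_x=T_x(\Sigma)$ is a real $G$-module with $\dim V_x=n$ and $V_x^G=0$, and effectiveness together with connectedness forces the central involution generating $Z\cong C_2$ to act nontrivially on $V_x$; hence, among the nontrivial real irreducibles of Table \ref{table:characters}, $V_x$ must contain one of the faithful modules $W_{8,1},W_{8,2},W_{8,3},W_{12,1},W_{12,2}$. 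In particular $n\geq8$, which settles $n\in\{0,\dots,7\}$.

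For $8\leq n\leq 13$ I would list the finitely many isomorphism types of such a $V_x$ --- a direct sum of nontrivial real irreducibles of $G$, of dimensions among $1,4,4,5,5,6,8,8,8,12,12$ and containing a summand of dimension $8$ or $12$ --- and feed them into Theorem \ref{theorem:main}. Part (2) disposes of every type containing none of $U_{4,2},U_{5,2},U_{5,1},U_6$, while part (1) disposes of the configuration in which every fixed point's tangential module contains $U_{4,2}$ or $U_{5,2}$ but no tangential module contains $U_6$ (the set appearing in the conclusion of part (1) would then be empty, hence of even size). Running this for $n=8,\dots,12$ leaves nothing, and for $n=13$ the only way to avoid a contradiction is that some fixed point $x$ satisfies $V_x\cong W_{8,k}\oplus U_{5,1}$. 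I would rule this out directly: by Table \ref{table:dims} one has $\dim V_x^{C_{4,A}}=2$ and $\dim V_x^{Q_{8,A}}=\dim V_x^{Q_{16}}=1$, so, since $C_{4,A},Q_{8,A},Q_{16}$ are $2$-groups with $C_{4,A}\leq Q_{8,A}\cap Q_{16}$ and $\langle Q_{8,A},Q_{16}\rangle=G$, Smith theory makes $\Sigma^{C_{4,A}}$ a connected $\mathbb{Z}_2$-homology $2$-sphere, that is, a genuine $S^2$, inside which $\Sigma^{Q_{8,A}}$ and $\Sigma^{Q_{16}}$ are smooth circles meeting transversally precisely along the finite set $\Sigma^{Q_{8,A}}\cap\Sigma^{Q_{16}}=\Sigma^G$; since the mod $2$ intersection number of two circles in $S^2$ vanishes, $|\Sigma^G|$ is even, a contradiction.

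The one-fixed-point statement for $n\in\{0,\dots,13\}$ is the special case $|\Sigma^G|=1$ of the above, so it remains to handle $S^n$ for $n\in\{15,16,17\}$, with $\Sigma^G=\{x_0\}$. I would rerun the enumeration of the tangential type $V_{x_0}$ with three further inputs. First, whenever $V_{x_0}$ contains $U_1$, the equality $\dim U_1^{\SL(2,5)}=1$ proved in Section \ref{section:algData} gives $\dim V_{x_0}^{\SL(2,5)}>0$; since $\SL(2,5)$ is the unique subgroup of index at most $2$ in $G$, Lemma \ref{lemma:index2MorimotoTamura} then contradicts $\Sigma^G=\{x_0\}$. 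A dimension count shows that every faithful real $G$-module of dimension $15$ contains $U_1$ (the only nontrivial real irreducible of dimension below $4$ is $U_1$, and once a summand of dimension $8$ or $12$ is used the number $15$ cannot be completed using $4,5,6,8,12$ alone), so $n=15$ is settled at once. Second, for the $U_1$-free types at $n=16,17$, Theorem \ref{theorem:main} again removes most of them, and each of the remaining ones admits, by inspection of Table \ref{table:dims}, a pair $H_1,H_2$ of non-Oliver subgroups from $\{Q_{8,A},Q_{16},[24,4]\}$ with $\langle H_1,H_2\rangle=G$ and a common subgroup $H\in\{C_{4,A},Q_{8,A},Q_{16}\}$ for which either $\dim V_{x_0}^H=0$, so that Lemma \ref{lemma:discreteStrategy} forces $|\Sigma^G|=2$, or $\dim V_{x_0}^{H_1}+\dim V_{x_0}^{H_2}=\dim V_{x_0}^{H}\leq 2$ with $\Sigma^H$ a genuine sphere of that dimension, so that Lemma \ref{lemma:discreteStrategy2} gives $\Sigma^G\neq\{x_0\}$; either way a contradiction.

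The bulk of the argument, and its main obstacle, is the case bookkeeping: for $n$ as large as $17$ there are many admissible tangential types, and for each one must select the correct triple of subgroups and invoke the correct lemma --- in practice this enumeration and verification are carried out by a GAP computation over the character table and subgroup lattice of $G$. The delicate conceptual points are (i) confirming that no tangential type evades all of Theorem \ref{theorem:main} and Lemmas \ref{lemma:discreteStrategy}, \ref{lemma:discreteStrategy2}, \ref{lemma:index2MorimotoTamura} --- this is exactly what pins the admissible dimensions to $n\leq 13$, respectively to $n\in\{15,16,17\}$ --- and (ii) justifying, in the residual $n=13$ case of two circles inside $S^2$, that the relevant fixed-point sets are smooth submanifolds of the predicted dimension that intersect transversally, so that the vanishing of a mod $2$ intersection number genuinely produces the parity obstruction on $|\Sigma^G|$.
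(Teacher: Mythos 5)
Your overall strategy is the paper's: Theorem \ref{theorem:main} plus faithfulness of the tangential modules forces $n\geq 13$ for odd-fixed-point actions, the residual type $U_{5,1}\oplus W_{8,k}$ at $n=13$ dies by the mod $2$ intersection-number argument for $\Sigma^{Q_{8,A}}$ and $\Sigma^{Q_{16}}$ inside $\Sigma^{C_{4,A}}$, and for one fixed point on $S^n$ with $n\in\{15,16,17\}$ one first bans $U_1$ via Lemma \ref{lemma:index2MorimotoTamura} together with $\dim U_1^{\SL(2,5)}=1$, and then kills the surviving tangential types. The enumeration you defer to GAP is short enough to do by hand, and the paper does so: no $U_1$-free faithful type of dimension $15$ or $16$ survives Theorem \ref{theorem:main}, and at $n=17$ exactly the three families $U_{4,1}\oplus U_{5,1}\oplus W_{8,i}$, $U_{4,2}\oplus U_{5,1}\oplus W_{8,i}$ and $U_{5,1}\oplus W_{12,j}$ remain.

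There is, however, one step that fails as stated. You claim that every surviving type at $n=16,17$ admits a generating pair $H_1,H_2$ of non-Oliver subgroups with a common subgroup $H$ such that either $\dim V^H=0$ (so Lemma \ref{lemma:discreteStrategy} applies) or $\dim V^{H_1}+\dim V^{H_2}=\dim V^H\leq 2$ (so Lemma \ref{lemma:discreteStrategy2} applies). For $V\cong U_{4,2}\oplus U_{5,1}\oplus W_{8,i}$, Table \ref{table:dims} gives $\dim V^{C_{4,A}}=5$, $\dim V^{Q_{8,A}}=3$, $\dim V^{Q_{16}}=2$, $\dim V^{[24,4]}=1$: no subgroup in the table has fixed-point dimension $0$, and the common subgroups of the two usable generating pairs, namely $C_{4,A}$ for $(Q_{8,A},Q_{16})$ and $Q_{8,A}$ for $(Q_{16},[24,4])$, have fixed-point dimensions $5$ and $3$, both outside the $k\in\{0,1,2\}$ window of Lemma \ref{lemma:discreteStrategy2}. (The subgroup $Q_{16}$ does satisfy $\dim V^{Q_{16}}=2$, but no pair of non-Oliver subgroups each containing $Q_{16}$ generates the whole group.) The paper disposes of this family by the same unrestricted mod $2$ intersection-number argument you already deploy at $n=13$: $\Sigma^{Q_{8,A}}$ and $\Sigma^{Q_{16}}$ have complementary dimensions $3+2=5$ in the $\mathbb{Z}_2$-homology sphere $\Sigma^{C_{4,A}}$, whose intersection form is trivial, so $|\Sigma^G|$ would be even. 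The gap is therefore repairable with a tool you already have in hand, but the dichotomy of lemmas you actually state does not cover this case.
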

\begin{proof}
	Suppose $\SL(2,5).C_2$ acts on a $\mathbb{Z}$-homology $n$-sphere $\Sigma$ with odd number of fixed points. Suppose first that for each $x\in\Sigma^{\SL(2,5).C_2}$, $T_x(\Sigma)$ contains at least one of the modules $U_{4,2}$ or $U_{5,2}$. By Theorem \ref{theorem:main} we conclude that there exists $x\in\Sigma^{\SL(2,5).C_2}$ such that $T_x(\Sigma)$ contains $U_6$. As the action is faithful, $T_x(\Sigma)$ must contain one of the modules from the set $\{W_{8,1},W_{8,2},W_{8,3},W_{12,1},W_{12,2}\}$. This would mean that $n\geq 6+8=14$.
	
	Assume that for at least one point $x\in\Sigma^{\SL(2,5).C_2}$, $T_x(\Sigma)$ contains neither of $U_{4,2}$, $U_{5,2}$, and $U_6$. By Theorem \ref{theorem:main} we conclude that $T_x(\Sigma)$ contains $U_{5,1}$. Thus (by Table \ref{table:dims}), in order to prove the assertion concerning actions with odd number of fixed points, we have to exclude three possibilities: $T_x(\Sigma)\cong U_{5,1}\oplus U_{8,i}$ for $i=1,2,3$. Fortunately, we will be able to exclude these three possibilities at once - just by using the fact that $U_{5,1}$ appears as a unique non-faithful direct summand of $T_x(\Sigma)$. The proof of the first part can be finished by considering the $\Mod{2}$-intersection numbers of $\Sigma^{Q_{8,A}}$ and $\Sigma^{Q_{16}}$ in $\Sigma^{C_{4,A}}$ as $\Sigma^{Q_{8,A}}$, $\Sigma^{Q_{16}}$ and $\Sigma^{C_{4,A}}$ are connected by the Smith theory (because they have positive dimension by Table \ref{table:dims}) and $\dim\Sigma^{Q_{8,A}}+\dim \Sigma^{Q_{16}}=1+1=2=\dim\Sigma^{C_{4,A}}$. Indeed, the intersection number argument yields $|\Sigma^{\SL(2,5).C_2}|$ even as $\Sigma^{Q_{8,A}}\cap\Sigma^{Q_{16}}=\Sigma^{\SL(2,5).C_2}$ for $Q_{8,A}$ and $Q_{16}$ generate $\SL(2,5).C_2$. 
	
	Assume now that $\SL(2,5).C_2$ acts effectively with exactly one fixed point on an $S^n$ for $n\in\{15,16,17\}$. Denote the fixed point by $x$ and put $V=T_x\Sigma$. By Lemma \ref{lemma:index2MorimotoTamura} and the last paragraph of subsection \ref{subsection:dims}, we conclude that $V$ cannot contain $U_1$ as a direct summand. As $V$ contains an effective module as a direct summand, and $\dim V>13$ (which has been shown in the first part of the proof), it follows that $\dim V\geq 8+5+4=17$. On the other hand, by Theorem \ref{theorem:main}, $V$ must contain $U_{5,1}$ or $U_6$ as a direct summand. Thus, by the effectiveness of $V$ and Table \ref{table:dims}, one of the following possibilities must hold: $V\cong U_{4,1}\oplus U_{5,1}\oplus W_{8,i}$, $V\cong U_{4,2}\oplus U_{5,1}\oplus W_{8,i}$ or $V\cong U_{5,1}\oplus W_{12,j}$ for $i\in\{1,2,3\}$ and $j\in\{1,2\}$. If $V\cong U_{4,1}\oplus U_{5,1}\oplus W_{8,i}$, then, by Table \ref{table:dims}, we have $\dim V^{Q_{8,A}}=1=1+0=\dim V^{Q_{16}}+\dim V^{[24,4]}$ and we can exclude this case by Lemma \ref{lemma:discreteStrategy2}. If $V\cong U_{4,2}\oplus U_{5,1}\oplus W_{8,i}$, then $\dim V^{C_{4,A}}=5=3+2=\dim V^{Q_{8,A}}+\dim V^{Q_{16}}$, whereas if $V\cong U_{5,1}\oplus W_{12,j}$, then $\dim V^{C_{4,A}}=2=1+1=\dim V^{Q_{8,A}}+\dim V^{Q_{16}}$. Both situations can be excluded by the intersection number argument.
\end{proof}
\begin{theorem}\emph{[cf. Theorem \ref{theorem:main2}]}\label{theorem:main22}
There are no $5$-pseudofree one fixed point actions of $\SL(2,5).C_2$ on $\mathbb{Z}$-homology spheres. In the case $\SL(2,5).C_2$ acts $6$-pseudofreely with exactly one fixed point on an $n$-dimensional $\mathbb{Z}$-homology sphere $\Sigma$, then $n=6+8k$ or $n=18+8k$ for $k\geq 0$. Moreover, if the action in question is effective and $n=6+8k$, then $k\geq 1$.
\end{theorem}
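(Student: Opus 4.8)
The plan is to reduce both claims to an analysis of the tangential $\SL(2,5).C_2$-representation at the unique fixed point, splitting the $5$-pseudofree and $6$-pseudofree cases only at the very end. So suppose $\SL(2,5).C_2$ acts $k$-pseudofreely on a $\mathbb{Z}$-homology $n$-sphere $\Sigma$ with $\Sigma^{\SL(2,5).C_2}=\{x\}$, $k\in\{5,6\}$, and set $V=T_x\Sigma$, so $n=\dim V$. First I would record three properties of $V$. It has no trivial summand, since $V^{\SL(2,5).C_2}=0$; it contains no copy of $U_1$, because Lemma \ref{lemma:index2MorimotoTamura} forces $\dim V^{\SL(2,5)}=0$ while $\dim U_1^{\SL(2,5)}=1$, exactly as in the proof of Theorem \ref{theorem:main12}; and $\dim V^Z\le k$ by the $k$-pseudofree hypothesis, which, since the faithful irreducibles have trivial $Z$-fixed subspace, says that the non-faithful part of $V$ has real dimension at most $k$. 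As the nontrivial non-faithful irreducibles other than $U_1$ have dimensions $4,4,5,5,6$ and any two of them sum to at least $8>k$, the non-faithful part of $V$ is either $0$ or a single one of $U_{4,1},U_{4,2},U_{5,1},U_{5,2}$, and also $U_6$ when $k=6$. This also disposes of non-effective actions: the normal subgroups are $1,Z,\SL(2,5),\SL(2,5).C_2$ by Figure \ref{poset:characters}, the kernels $\SL(2,5)$ and $\SL(2,5).C_2$ would force $V=0$ and $n=0$ (impossible for a one fixed point action), and the kernel $Z$ case is just the sub-case with empty faithful part, treated by the same computation below.

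Next I would shrink the list via Theorem \ref{theorem:main}. If $V$ contained $U_{4,2}$ or $U_{5,2}$, then, $\Sigma^{\SL(2,5).C_2}$ being a single point, Theorem \ref{theorem:main}(1) would force $V$ to contain $U_6$ as well, giving the non-faithful part of $V$ dimension at least $10$ and contradicting $\dim V^Z\le k$. Hence $V$ contains neither, so by Theorem \ref{theorem:main}(2) it contains $U_{5,1}$ or $U_6$; together with the previous paragraph, the non-faithful part of $V$ is $U_{5,1}$ when $k=5$, and $U_{5,1}$ or $U_6$ when $k=6$. To exclude $U_{5,1}$ I would rerun the $\Mod 2$-intersection number argument of the proof of Theorem \ref{theorem:main12}: by Lemma \ref{lemma:apropriateGroups2} we may take $C_{4,A}\le Q_{8,A}\cap Q_{16}$ with $\langle Q_{8,A},Q_{16}\rangle=\SL(2,5).C_2$, and when the non-faithful part of $V$ is $U_{5,1}$ Table \ref{table:dims} gives $\dim V^{Q_{8,A}}=\dim V^{Q_{16}}=1$ and $\dim V^{C_{4,A}}=2$ (the faithful irreducibles being fixed-point free for these three subgroups). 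By the Smith theory $\Sigma^{Q_{8,A}},\Sigma^{Q_{16}},\Sigma^{C_{4,A}}$ are then connected $\mathbb{Z}_2$-homology spheres of dimensions $1,1,2$; since $1+1=2$ and $V^{Q_{8,A}}\cap V^{Q_{16}}=V^{\SL(2,5).C_2}=0$, the submanifolds $\Sigma^{Q_{8,A}},\Sigma^{Q_{16}}\subseteq\Sigma^{C_{4,A}}$ meet transversally and only at $x$, so their $\Mod 2$-intersection number in $\Sigma^{C_{4,A}}$ is $1$, whereas it is $0$ because $\Sigma^{C_{4,A}}$ is a $\mathbb{Z}_2$-homology $2$-sphere. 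This contradiction finishes the $k=5$ case, no possibility for $V$ remaining.

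For $k=6$ the non-faithful part of $V$ must be $U_6$, so write $V=U_6\oplus W$ with $W$ a possibly empty sum of faithful irreducibles from $\{W_{8,1},W_{8,2},W_{8,3},W_{12,1},W_{12,2}\}$. By Proposition \ref{proposition:primeCyclic} we have $\dim U_6^{C_3}=\dim U_6^{C_5}=2$, $\dim W_{8,1}^{C_3}=\dim W_{8,1}^{C_5}=0$ ($W_{8,1}$ is free), $\dim W_{8,2}^{C_3}=\dim W_{8,3}^{C_3}=\dim W_{12,1}^{C_3}=\dim W_{12,2}^{C_3}=4$, $\dim W_{8,i}^{C_5}=0$ for $i=1,2,3$, and $\dim W_{12,1}^{C_5}=\dim W_{12,2}^{C_5}=4$. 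The inequalities $\dim\Sigma^{C_3}\le 6$, $\dim\Sigma^{C_5}\le 6$ then force $W$ to contain at most one summand from $\{W_{8,2},W_{8,3},W_{12,1},W_{12,2}\}$ (two would give $\dim\Sigma^{C_3}\ge 2+8>6$), plus any number $m\ge 0$ of copies of $W_{8,1}$; hence $n=\dim V$ is $6+8m$, $6+8(m+1)$ or $18+8m$, that is $n=6+8k$ or $n=18+8k$ for some $k\ge 0$. If the action is effective then $V$ is faithful, so $W\ne 0$ (the summand $U_6$ alone has kernel $Z$), whence $\dim W\ge 8$, $n\ge 14$, and $n=6+8k$ forces $k\ge 1$. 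I expect the exclusion of $U_{5,1}$ to be the main obstacle: it rests on the numerical coincidence $\dim V^{Q_{8,A}}+\dim V^{Q_{16}}=\dim V^{C_{4,A}}$ read off Table \ref{table:dims}, on all three fixed-point sets being positive-dimensional (so that the middle homology sphere has trivial intersection form in the relevant degrees), and on transversality at $x$, which comes from $V^{\SL(2,5).C_2}=0$; verifying these hypotheses carefully is the heart of the matter.
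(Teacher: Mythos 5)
Your proposal is correct and follows essentially the same route as the paper: exclude $U_1$ via Lemma \ref{lemma:index2MorimotoTamura}, force $U_{5,1}$ or $U_6$ into $V=T_x\Sigma$ via Theorem \ref{theorem:main}, kill the $U_{5,1}$ case with the $\Mod{2}$-intersection number of $\Sigma^{Q_{8,A}}$ and $\Sigma^{Q_{16}}$ in $\Sigma^{C_{4,A}}$, and then read off the admissible faithful part from the pseudofreeness bounds on cyclic-subgroup fixed sets. Your accounting of the faithful summands (at most one from $\{W_{8,2},W_{8,3},W_{12,1},W_{12,2}\}$, using $C_3$ as well as $C_5$) is in fact slightly sharper than the paper's phrasing, but it yields the same conclusion $n=6+8k$ or $n=18+8k$.
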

\begin{proof}
Suppose first that $\SL(2,5).C_2$ acts $6$-pseudofreely with exactly one fixed point on some $\mathbb{Z}$-homology sphere $\Sigma$. Let $\Sigma^{\SL(2,5).C_2}=\{x\}$ and put $V=T_x(\Sigma)$. By Theorem \ref{theorem:main12}, it follows that $V$ contains either $U_{5,1}$ or $U_6$ as a direct summand. On the other hand, by Lemma \ref{lemma:index2MorimotoTamura}, $V$ cannot contain $U_1$ as a direct summand since, by the last paragraph of subsection \ref{subsection:dims}, $\dim U_1^{\SL(2,5)}=1$. As $\SL(2,5).C_2$ acts $6$-pseudofreely, it follows by Proposition \ref{proposition:primeCyclic} that $V\cong U_{5,1}\oplus W$ or $V\cong U_6\oplus W$, where $W$ contains only $W_{8,i}$ and $W_{12,j}$ as direct summands. The former case can be however excluded by the intersection number argument as $\dim V^{C_{4,A}}=2=1+1=\dim V^{Q_{8,A}}+\dim V^{Q_{16}}$. This shows immediately that $\SL(2,5).C_2$ cannot act $5$-pseudofreely. Moreover, if $\SL(2,5).C_2$ acts $6$-pseudofreely, then, by Proposiotion \ref{proposition:mainSubgroupsDims}, $V\cong U_6\oplus W$, where $W$ consists only of modules $W_{8,i}$ and at most one of the modules $W_{12,j}$. This shows $n=6+8k$ or $n=18+8k$ for $k\geq 0$. In case the action in question is effective and $n=6+8k$, $W$ must by nontrivial and thus $k\geq 1$. This concludes the proof.
\end{proof}
\section*{Acknowledgements}
 The author would like to thank Prof. Masaharu Morimoto for the suggestion to consider the problem presented in this article and many helpful comments during the work on it. I would like to thank also Prof. Krzysztof Pawałowski for important suggestions which essentially improved the presentation of this paper. My sincere thanks go as well to Mr. Shunsuke Tamura for enlightening discussions on the subject.


\bibliographystyle{acm}
\bibliography{sample}
\textcolor{white}{fsf}\\
\emph{Faculty of Mathematics and Computer Science}\\
\emph{Adam Mickiewicz University in Poznań}\\
\emph{ul Uniwersytetu Poznańskiego 4}\\
\emph{61-614 Poznań, Poland}\\
\emph{Email address:} piotr.mizerka@amu.edu.pl







\end{document}